\newtheorem{theorem}{Theorem}[section]
\newtheorem{proposition}[theorem]{Proposition}
\newtheorem{corollary}[theorem]{Corollary}
\newtheorem{definition}{Definition}[section]
\newtheorem{example}{Example}[section]
\newcommand{\im}{{\mathrm{im}\hspace{0.1em}}}
\definecolor{myred}{RGB}{180,0.00,0.00}
\definecolor{myblue}{RGB}{0.00,0.00,180}
    \newcommand*{\addFileDependency}[1]{
    \typeout{(#1)}
    \@addtofilelist{#1}
    \IfFileExists{#1}{}{\typeout{No file #1.}}
    }
\newcommand{\rightcross}[1]{
  \begin{tikzpicture}[knot/.style={black}]
    \begin{scope}[xshift=5cm]
      \draw[line width=0.8pt] (-#1,#1)-- (#1,-#1);
      \draw[white,double=black,double distance=0.8pt,thick](-#1,-#1)-- (#1,#1);
    \end{scope}
  \end{tikzpicture}
}
\newcommand{\arightcross}[1]{
  \begin{tikzpicture}[knot/.style={black}]
    \begin{scope}[xshift=5cm]
      \draw[line width=0.8pt,->]  (#1,-#1)--(-#1,#1);
      \draw[white,double=black,double distance=0.8pt,thick](-#1,-#1)-- (#1,#1);
      \draw[line width=0.8pt,->] (#1*0.5,#1*0.5)--(#1,#1);
      \draw[line width=0.4pt](0,0)circle (#1*1.414);
    \end{scope}
  \end{tikzpicture}
}
\newcommand{\leftcross}[1]{
  \begin{tikzpicture}[knot/.style={black}]
    \begin{scope}[xshift=5cm]
      \draw[line width=0.8pt] (-#1,-#1)-- (#1,#1);
      \draw[white,double=black,double distance=0.8pt,thick](-#1,#1)-- (#1,-#1);
      \draw[line width=0.4pt](0,0)circle (#1*1.414);
    \end{scope}
  \end{tikzpicture}
}
\newcommand{\aleftcross}[1]{
  \begin{tikzpicture}[knot/.style={black}]
    \begin{scope}[xshift=5cm]
      \draw[line width=0.8pt,->] (-#1,-#1)-- (#1,#1);
      \draw[white,double=black,double distance=0.8pt,thick](-#1,#1)-- (#1,-#1);
      \draw[line width=0.8pt,->] (-#1*0.5,#1*0.5)-- (-#1,#1);
      \draw[line width=0.4pt](0,0)circle (#1*1.414);
    \end{scope}
  \end{tikzpicture}
}
\newcommand{\udarc}[1]{

  \begin{tikzpicture}
    \draw[line width=0.8pt] (0,0) arc (140:40:#1cm);
    \draw[line width=0.8pt] (0,#1*1.2) arc (220:320:#1cm);
     \draw[line width=0.4pt](#1*0.766,#1*0.6)circle (#1);
  \end{tikzpicture}
}
\newcommand{\cudarc}[1]{

  \begin{tikzpicture}
    \draw[line width=0.8pt] (0,0) arc (140:40:#1cm);
    \draw[line width=0.8pt] (0,#1*1.2) arc (220:320:#1cm);
    \draw[line width=0.4pt] (-#1*0.333, -#1*0.5) -- (-#1*0.333 , #1*1.7)--(#1*1.866 , #1*1.7)--(#1*1.866 , -#1*0.5)--(-#1*0.333, -#1*0.5);
  \end{tikzpicture}
}
\newcommand{\cudarcc}[1]{

  \begin{tikzpicture}
    \draw[line width=0.8pt] (0,0) arc (140:40:#1cm);
    \draw[line width=0.8pt] (0,#1*1.2) arc (220:320:#1cm);
    \draw[line width=0.4pt] (-#1*0.233, -#1*0.5) -- (-#1*0.233 , #1*1.7)--(#1*3.5 , #1*1.7)--(#1*3.5 , -#1*0.5)--(-#1*0.233, -#1*0.5);
    \draw[line width=0.8pt] (#1*1.532,0) arc[start angle=-140, end angle=140, radius=#1*0.933];
  \end{tikzpicture}
}
\newcommand{\longcircle}[1]{

  \begin{tikzpicture}
    \draw[line width=0.8pt] (0,0) arc (140:40:#1cm);
    \draw[line width=0.8pt] (0,#1*1.2) arc (220:320:#1cm);
    \draw[line width=0.4pt] (-#1*1.933, -#1*0.5) -- (-#1*1.933 , #1*1.7)--(#1*3.5 , #1*1.7)--(#1*3.5 , -#1*0.5)--(-#1*1.933, -#1*0.5);
    \draw[line width=0.8pt] (#1*1.532,0) arc[start angle=-140, end angle=140, radius=#1*0.933];
    \draw[line width=0.8pt] (0,0) arc[start angle= 320, end angle=40, radius=#1*0.933];
  \end{tikzpicture}
}
\newcommand{\rcrossingarc}[1]{

  \begin{tikzpicture}
    \draw[line width=0.8pt] (#1,-#1) arc (-135:135:#1 *1.414cm);
    \draw[line width=0.8pt](-#1*1.2,#1*1.2)-- (#1,-#1);
    \draw[white,double=black,double distance=0.8pt,thick] (-#1*1.2,-#1*1.2)-- (#1,#1);
  \end{tikzpicture}
}
\newcommand{\eight}[1]{

  \begin{tikzpicture}
    \draw[line width=0.8pt] (#1,-#1) arc (-135:135:#1 *1.414cm);
    \draw[line width=0.8pt] (-#1,#1) arc (45:315:#1 *1.414cm);
    \draw[line width=0.8pt](-#1,#1)-- (#1,-#1);
    \draw[white,double=black,double distance=0.8pt,thick] (-#1*1,-#1*1)-- (#1,#1);
  \end{tikzpicture}
}
\newcommand{\cudarco}[1]{

  \begin{tikzpicture}
    \draw[line width=0.4pt] (-#1*0.233, -#1*0.5) -- (-#1*0.233 , #1*1.7)--(#1*3.5 , #1*1.7)--(#1*3.5 , -#1*0.5)--(-#1*0.233, -#1*0.5);
    \draw[line width=0.8pt] (#1* 2.247,#1* 0.6)circle (#1*0.933);
    \draw[line width=0.8pt] (#1*0.5,0) arc[start angle=-40, end angle=40, radius=#1*0.933];
  \end{tikzpicture}
}
\newcommand{\ccircle}[1]{

  \begin{tikzpicture}
    \draw[line width=0.4pt] (-#1*1.233, -#1*0.5) -- (-#1*1.233 , #1*1.7)--(#1*3.5 , #1*1.7)--(#1*3.5 , -#1*0.5)--(-#1*1.233, -#1*0.5);
    \draw[line width=0.8pt] (#1* 2.247,#1* 0.6)circle (#1*0.933);
    \draw[line width=0.8pt] (#1* 0,#1* 0.6)circle (#1*0.933);
  \end{tikzpicture}
}
\newcommand{\saddle}[1]{

  \begin{tikzpicture}
    \draw[line width=0.8pt] (0,0) arc (140:40:#1cm);
    \draw[line width=0.8pt] (#1 *0.766,#1 *0.3572) -- (#1 *0.766, #1 *0.8428);
    \draw[line width=0.8pt] (0,#1*1.2) arc (220:320:#1cm);
    \draw[line width=0.4pt](#1*0.766,#1*0.6)circle (#1);
  \end{tikzpicture}
}
\newcommand{\lrarc}[1]{

  \begin{tikzpicture}
    \draw[line width=0.8pt] (#1*1.2,0) arc (130:230:#1cm);
    \draw[line width=0.8pt] (0,0) arc (50:-50:#1cm);
    \draw[line width=0.4pt](#1*0.6,-#1*0.766)circle (#1);
  \end{tikzpicture}
}
\newcommand{\clrarc}[1]{

  \begin{tikzpicture}
    \draw[line width=0.8pt] (#1*1.2,0) arc (130:230:#1cm);
    \draw[line width=0.8pt] (0,0) arc (50:-50:#1cm);
    \draw[line width=0.4pt] (-#1*0.5, -#1*1.866) -- (-#1*0.5 , #1*0.333)--(#1*1.7 , #1*0.333)--(#1*1.7 , -#1*1.866)--(-#1*0.5, -#1*1.866);
  \end{tikzpicture}
}
\newcommand{\hsaddle}[1]{
  \begin{tikzpicture}
    \draw[line width=0.8pt] (#1*1.2,0) arc (130:230:#1cm);
    \draw[line width=0.8pt] ( #1 * 0.3572, - #1 * 0.7660)-- (#1 * 0.2 + #1 * 0.6428, - #1 * 0.7660);
    \draw[line width=0.8pt] (0,0) arc (50:-50:#1cm);
    \draw[line width=0.4pt](#1*0.6,-#1*0.766)circle (#1);
  \end{tikzpicture}
}
\title{Khovanov homology of tangles:  algorithm and computation}
\author[1]{Li Shen$^{\ast}$}
\author[2,3]{Jian Liu$^{\ast}$}
\author[3,4,5]{Guo-Wei Wei\thanks{Corresponding author: weig@msu.edu}}
\affil[1]{NSF-Simons National Institute for Theory and Mathematics in Biology, Chicago IL.}
\affil[2]{Mathematical Science Research Center, Chongqing University of Technology, Chongqing 400054, China}
\affil[3]{Department of Mathematics, Michigan State University, MI 48824, USA}
\affil[4]{Department of Electrical and Computer Engineering, Michigan State University, MI 48824, USA}
\affil[5]{Department of Biochemistry and Molecular Biology, Michigan State University, MI 48824, USA}
    \renewcommand*{\@fnsymbol}[1]{\ensuremath{\ifcase#1\or \dagger\or *\or *\or
   \mathsection\or \else\@ctrerr\fi}}
\date{}
\begin{document}
    \maketitle
    \footnotetext[2]{Shen and Liu contributed equally to this work.
    The work of Liu was done during his two-year stay at Michigan State University.
    }

    \paragraph{Abstract}

Knot, link, and tangle theory is crucial in both mathematical theory and practical application, including quantum physics, molecular biology, and structural chemistry. Unlike knots and links, tangles impose more relaxed constraints, allowing the presence of arcs, which makes them particularly valuable for broader applications.
Although Khovanov homology for knots and links has been extensively studied, its computation for tangles remains largely unexplored.
In our recent work, we provide a topological quantum field theory (TQFT) construction for the Khovanov homology of tangles, offering a more concrete method for its computation.
The primary contribution of this work is a comprehensive approach to the computation of the Khovanov homology of tangles, offering both a detailed computation procedure and a practical guide for implementing algorithms through codes to facilitate the calculation. This contribution paves the way for further studies and applications of Khovanov homology in the context of tangles.

    \paragraph{Keywords}
     Tangle, Khovanov homology, cube state, planar code, algorithm.

\footnotetext[1]
{ {\bf 2020 Mathematics Subject Classification.}  	Primary  57K18; Secondary 57K10,  68W30.
}

\tableofcontents 

\section{Introduction}

Knot, link, and tangle are fundamental objects in geometric topology that study different types of curve entanglements in 3-dimensional (3D) space \cite{adams2004knot,rolfsen2003knots}.
These objects play a crucial role in understanding the topological properties of 3-manifolds and have applications in various fields, including quantum field theory in physics \cite{hsieh2012quantum,witten1989quantum}, protein folding in biology \cite{chou1974conformational,dill2012protein}, and molecular entanglements in chemistry \cite{fielden2017molecular,forgan2011chemical}.

Khovanov homology is a significant topological invariant for knots, links, and tangles, providing a classification for these objects and categorifying the Jones polynomial \cite{bar2002khovanov,bar2005khovanov,khovanov2000categorification,khovanov2002functor}. The study of Khovanov homology computations is crucial, both for advancing mathematical research and for providing practical tools for various applications.

Despite its theoretical significance, computing Khovanov homology by hand is computationally intensive due to the exponential growth of the number of states and the complexity of differential maps. Efficient algorithms and computer implementations are essential for calculating Khovanov homology for complex knots, links, and tangles.
Computational algorithms for computing Khovanov homology of knots and links, such as the topological quantum field theory (TQFT), are well-established \cite{bar2007fast,schmidhuber2025quantum,williams2008computations}.
Moreover, the requirement for data to be represented as closed curves imposes stringent constraints in many practical applications. In contrast, tangles impose less restrictive conditions compared to knots and links, since they permit the presence of arcs, offering a wider range of potential applications. Therefore, the development of algorithms and computational techniques for the Khovanov homology of tangles is an area of significant importance.

However, for years, there have been no feasible computational procedures to compute the Khovanov homology for tangles due to the lack of a TQFT construction and/or other computable algorithms for tangles. A recent work has closed this gap by proposing a TQFT construction for the Khovanov theory of tangles \cite{liu2024persistent}.  Specifically, a concrete functor that maps the category of tangles to the category of modules was given, which in principle enables the computation of tangle homology. However, the emphasis of this work was on the introduction of persistent Khovanov homology of tangles as an extension of earlier evolutionary persistent Khovanov homology \cite{shen2024evolutionary}. As such, detailed computational algorithms for the Khovanov theory of tangles remain a lack.

This work focuses on the computational aspects of Khovanov homology for tangles. We present methods, strategies, and algorithms to render these calculations feasible. The main contribution of this work is to provide a detailed method for calculating the Khovanov homology of tangles. This includes both a step-by-step guide for manual computation of Khovanov homology and a demonstration of how to use code to perform these calculations efficiently.

This paper is organized as follows. In the next section, we review some fundamental concepts that may be involved. Section \ref{section:encoding} presents the encoding representation of tangle diagrams. Section \ref{section:computation} provides the methods and properties for computing Khovanov homology, along with some computational examples. Finally, we explore the understanding of tangle's Khovanov homology from the perspective of algorithms and code, and provide practical examples.

\section{Preliminaries}

In this section, we briefly review the concepts of chain complexes, cochain complexes, homology, and cohomology, providing a foundation for understanding the Khovanov complex. In addition, we give a concise introduction to knots, links, and tangles.

\subsection{Chain complex and homology}

\subsubsection{Chain complex and cochain complex}

A chain complex provides an algebraic model of a geometric object. The grading of the chain complex corresponds to the dimension of the geometric elements. For example, in a tetrahedron, the interior is 3-dimensional, the faces are 2-dimensional, the edges are 1-dimensional, and the vertices are 0-dimensional; these correspond precisely to the generators in the chain complex.

For a geometric object such as a simplicial complex $K$, the boundary refers to its surface $\partial K$. The surface $\partial K$ is closed, which geometrically corresponds to the fact that the boundary of the boundary is empty, i.e.,
\[
\partial^2 K = \partial(\partial K) = \emptyset.
\]

Let $\mathbb{K}$ be a coefficient ring or field, for example the real numbers $\mathbb{R}$, the rational numbers $\mathbb{Q}$, or the integers $\mathbb{Z}$. A \textbf{chain complex} $(C_{\ast},d_{\ast})=(C_{k}, d_{k})_{k\geq 0}$ over $\mathbb{K}$ is a graded $\mathbb{K}$-module
\[
C = \bigoplus_{k\ge 0} C_k
\]
equipped with boundary maps (or differentials) $d_k : C_k \to C_{k-1}$ satisfying
\[
d_{k-1} \circ d_k = 0 \quad \text{for all } k\ge 1, \quad \text{and} \quad d_0 = 0.
\]
The condition $d_{k-1} \circ d_k = 0$ corresponds algebraically to the geometric intuition that the boundary of a boundary is empty.

\begin{example}
Consider a 2-simplex (triangle) $\triangle ABC$. The associated chain complex over a $\mathbb{Z}$ is
\[
C_2 \xrightarrow{d_2} C_1 \xrightarrow{d_1} C_0 \xrightarrow{d_0} 0,
\]
where the chain groups are
\[
C_2 = \langle [ABC] \rangle, \quad
C_1 = \langle [AB], [BC], [CA] \rangle, \quad
C_0 = \langle [A], [B], [C] \rangle,
\]
and the boundary maps are
\[
\begin{aligned}
d_2([ABC]) &= [BC] - [AC] + [AB],\\
d_1([AB]) &= [B] - [A], \quad d_1([BC]) = [C] - [B], \quad d_1([CA]) = [A] - [C],\\
d_0 &= 0.
\end{aligned}
\]

Geometrically, $C_2$ corresponds to the triangular face, $C_1$ to the edges, and $C_0$ to the vertices. The condition $d_{k-1}\circ d_k = 0$ reflects the geometric intuition that ``the boundary of a boundary is empty.''
\end{example}

Dually, a \textbf{cochain complex} $(C^{\ast},d^{\ast})=(C^{k}, d^{k})_{k\geq 0}$ is a graded $\mathbb{K}$-module
\[
  C = \bigoplus_{k\ge 0} C_k
\]
equipped with coboundary maps $d^k : C^k \to C^{k+1}$ satisfying
\[
  d^{k+1} \circ d^k = 0 \quad  \text{for all } k\ge 0.
\]

There are many constructions of chain complexes and cochain complexes that provide algebraic models of geometric or topological objects. For instance, one can consider the following examples:

\begin{itemize}
    \item The \emph{chain complex of a simplicial complex}, where the chain groups are generated by simplices of various dimensions and the boundary maps encode their faces.
    \item The \emph{de Rham complex of a smooth manifold}, where the cochain groups consist of differential forms and the coboundary operator is given by the exterior derivative.
    \item The \emph{Khovanov complex of a knot or tangle}, where the chain groups are generated by smoothing states of the diagram and the differential is defined via saddle cobordisms.
    \item The \emph{cellular chain complex of a CW complex}, where the chain groups are generated by cells and the boundary maps reflect the attaching maps between cells.
\end{itemize}

Each of these constructions provides an algebraic characterization of the underlying space or object, allowing for computations of homology or cohomology that capture topological invariants.

\subsubsection{Homology}

Homology and cohomology are among the most fundamental topological invariants used to characterize a topological space or a geometric object. They encode intrinsic geometric properties of the underlying space.

Let $f: V \to W$ be a linear map between linear spaces over a field $\mathbb{K}$. The \textbf{kernel} of $f$ is defined by
\[
\ker f = \{ x \in V \mid f(x) = 0 \}.
\]
The \textbf{image} of $f$ is defined by
\[
\im f = \{ f(x) \in W \mid x \in V \}.
\]
It is clear that $\ker f$ is a subspace of $V$, while $\mathrm{im}\, f$ is a subspace of $W$.

Let $(C_{\ast},d_{\ast}) = (C_k, d_k)_{k \ge 0}$ be a chain complex over a field $\mathbb{K}$. We have a sequence
\[
\xymatrix{
\cdots \ar[r]^{d_{k+1}} & C_k \ar[r]^{d_k} & C_{k-1} \ar[r]^{d_{k-1}} & \cdots \ar[r]^{d_1} & C_0 \ar[r]^{d_0} & 0.
}
\]
The \textbf{homology} of $(C_k, d_k)_{k \ge 0}$ is defined by
\[
H_k(C,d) = \frac{\ker d_k}{\mathrm{im}\, d_{k+1}}, \quad k \ge 0.
\]

Similarly, for a cochain complex $(C^\ast, d^\ast) = (C^k, d^k)_{k \ge 0}$ over $\mathbb{K}$, the corresponding \textbf{cohomology} is defined by
\[
H^k(C^\ast, d^\ast) = \frac{\ker d^k}{\mathrm{im}\, d^{k-1}}, \quad k \ge 0.
\]

It is worth noting that the grading of a chain complex and its homology can be extended to the integers $\mathbb{Z}$. In this case, the chain groups $C_k$ and the homology groups $H_k(C)$ are defined for all $k \in \mathbb{Z}$, and the grading may take negative values.

\subsection{Tangle and crossing}

\subsubsection{Knots, links, and tangles}
A \textbf{knot} is an embedding of the circle into the three-dimensional Euclidean space $\mathbb{R}^3$. Intuitively, a knot can be regarded as a closed curve in $\mathbb{R}^3$ without self-intersections or singular points. Such structures frequently appear in physics, chemistry, biology, and materials science. However, the number of distinct knot types is relatively limited, which restricts the scope of practical applications involving knot-type data.

In contrast, a \textbf{link} is a disjoint embedding of finitely many circles into $\mathbb{R}^3$, subject to looser constraints. Moreover, a \textbf{tangle}, defined as an embedding of finitely many arcs and circles into a three-dimensional ball, imposes even fewer restrictions. As a result, the variety of tangle types is considerably richer, suggesting a broader range of potential application scenarios.

A \textbf{tangle diagram} is defined as a projection
\[
T \longrightarrow B^{2}
\]
of a tangle \(T\) onto a maximal disk \(B^{2} \subset B^{3}\), which is injective except at finitely many \textbf{crossing points}.
\begin{figure}[h]
  \centering
  \includegraphics[width=0.2\textwidth]{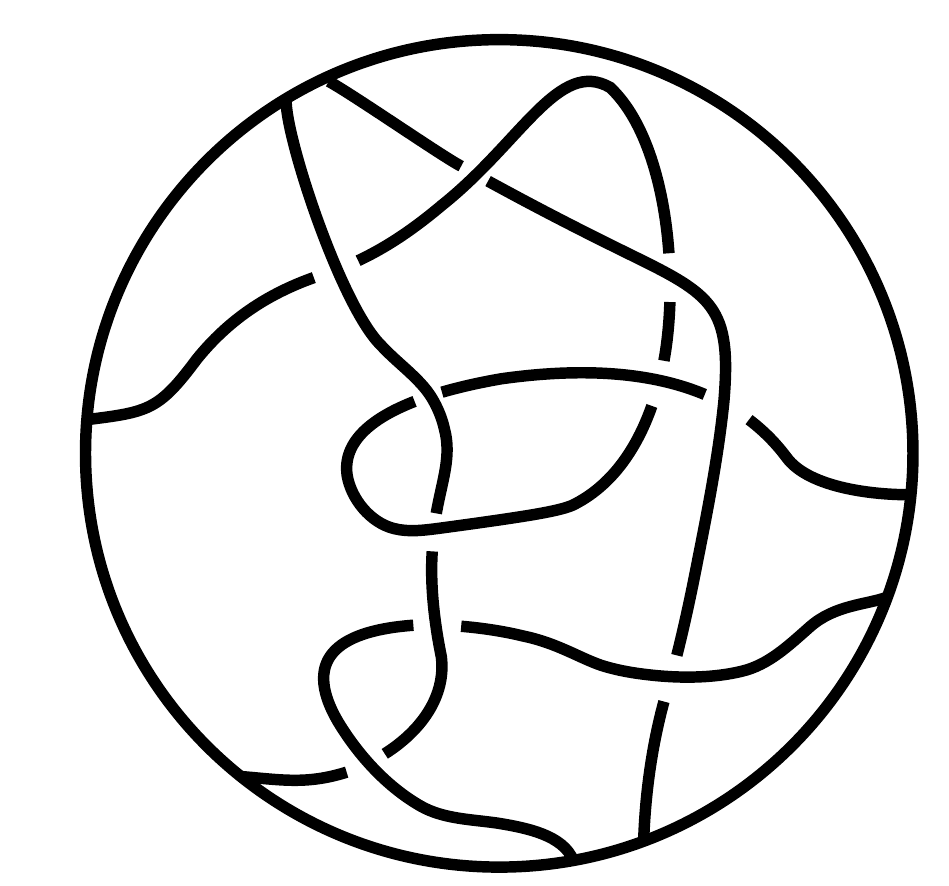}
  \caption{Illustration of a tangle diagram.}\label{figure:tangle_diagram}
\end{figure}
Each crossing point corresponds to the projection of exactly two distinct points of the tangle. This notion generalizes the concepts of knot diagrams and link diagrams.

A \textbf{Reidemeister move} of a tangle diagram,  as shown in Figure \ref{figure:R_move}, is a local transformation consisting of one of the following three types:
\begin{enumerate}[label=(R\arabic*), labelindent=2em, leftmargin=*, labelsep=0.6em]
  \item twisting or untwisting a single loop;
  \item adding or removing two crossings;
  \item sliding a strand over or under a crossing.
\end{enumerate}
\begin{figure}[h]
  \centering
  \includegraphics[width=0.6\textwidth]{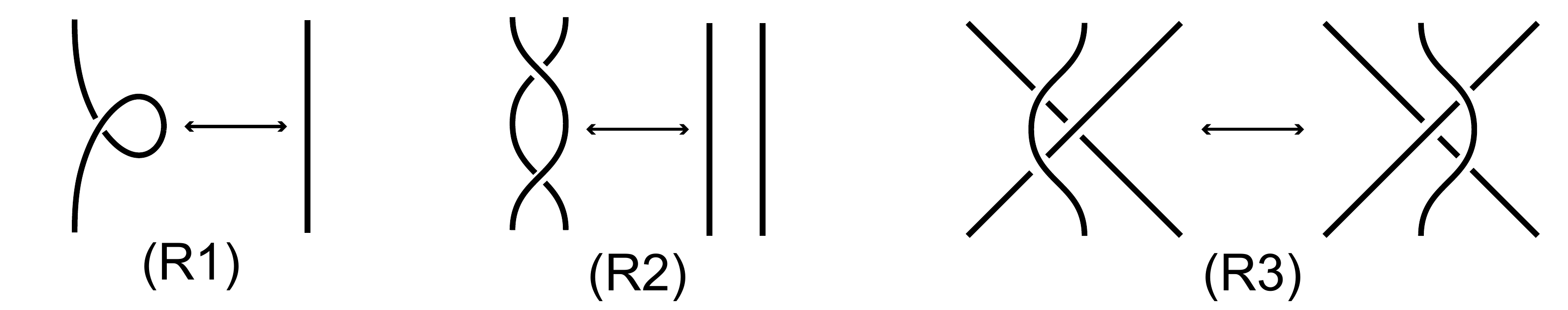}
  \caption{Illustration of the three types of Reidemeister moves.}\label{figure:R_move}
\end{figure}
Two tangle diagrams are said to be \textbf{equivalent} if they are related by a finite sequence of Reidemeister moves.

\subsubsection{Crossings and their types}

In a tangle diagram, a crossing point is referred to as a \textbf{crossing}. For a given tangle diagram, a crossing of the form $\raisebox{-0.15cm}{\rightcross{0.2}}$ is called an \textbf{overcrossing}, while one of the form $\raisebox{-0.15cm}{\leftcross{0.2}}$ is called an \textbf{undercrossing}. Each crossing admits two smoothing resolutions:
\[
\raisebox{-0.15cm}{\rightcross{0.2}} \ \Rightarrow\  \raisebox{-0.15cm}{\udarc{0.3}} \ +\ \raisebox{-0.15cm}{\lrarc{0.3}},
\qquad
\raisebox{-0.15cm}{\leftcross{0.2}} \ \Rightarrow\  \raisebox{-0.15cm}{\udarc{0.3}} \ +\ \raisebox{-0.15cm}{\lrarc{0.3}}.
\]
Here, $\raisebox{-0.15cm}{\udarc{0.3}}$, called the \textbf{0-smoothing}, is obtained by locally replacing the crossing with two opposing arcs arranged vertically (one above the other), while $\raisebox{-0.15cm}{\lrarc{0.3}}$, called the \textbf{1-smoothing}, is obtained by replacing it with two opposing arcs arranged horizontally (one to the left and one to the right). In this work, all 0-smoothings and 1-smoothings are applied to undercrossings $\raisebox{-0.15cm}{\leftcross{0.2}}$.

A tangle is said to be \textbf{oriented} if each of its strands is assigned a direction, typically indicated by arrows along the strands in its diagram. An orientation allows us to distinguish between the start and end of each strand. Given a fixed orientation, each crossing in a tangle can take one of two forms: a \textbf{right-handed crossing} of the form $\raisebox{-0.15cm}{\arightcross{0.2}}$ or a \textbf{left-handed crossing} of the form $\raisebox{-0.15cm}{\aleftcross{0.2}}$. We always assign the symbol $+$ to right-handed crossings and the symbol $-$ to left-handed crossings. We also commonly denote the number of right-handed crossings by $n_{+}$, and the number of left-handed crossings by $n_{-}$.

Given a tangle $T$ with $n$ crossings, each crossing can be \textbf{smoothed} in two possible ways, yielding $2^{n}$ distinct \textbf{states of smoothings}. These $2^{n}$ states naturally form the vertices of an $n$-dimensional cube $\{0,1\}^{n}$. Each vertex of the cube corresponds to a particular state, which can be denoted by
\begin{equation*}
  s = (s_{1}, s_{2}, \dots, s_{n}) \in \{0,1\}^{n}.
\end{equation*}
The state $s$ corresponds to a tangle $T_s$, which consists of a disjoint union of circles and arcs.

\section{Encodings of tangles}\label{section:encoding}

\subsection*{Gauss Code for Tangles}

We introduce the \emph{Gauss code for tangles}, a combinatorial representation that applies uniformly to both knots/links (closed components) and tangles (which may include open strands).

A tangle diagram $T$ with $n$ components is assigned a Gauss code consisting of two parts:
\[
\bigl[ C_1, C_2, \dots, C_n \bigr], \quad (t_1, t_2, \dots, t_n),
\]
where each $C_i$ is a sequence of signed integers encoding the crossings encountered along the $i$-th component, and $t_i \in \{\texttt{o}, \texttt{c}\}$ specifies whether that component is \texttt{open} or \texttt{closed}, respectively.

The construction proceeds as follows:
\begin{enumerate}
  \item Traverse each component in a fixed orientation --- from one endpoint for open strands, or arbitrarily for closed loops;
  \item Assign global crossing labels ($1,2,3,\dots$) in the order of first appearance;
  \item When a crossing is encountered, record its label with a sign: positive for overcrossings, negative for undercrossings;
  \item Repeat for all components to obtain the tuple of sequences $\{C_i\}$.
\end{enumerate}

By definition, each crossing label appears \textbf{exactly twice} across all sequences. Optionally, one may refine the notation by attaching \emph{handedness} information (e.g., $+1L$, $-2R$), producing the \emph{extended Gauss code}.

\medskip

\begin{example}
\begin{figure}[h]
  \centering
  \includegraphics[width=0.4\textwidth]{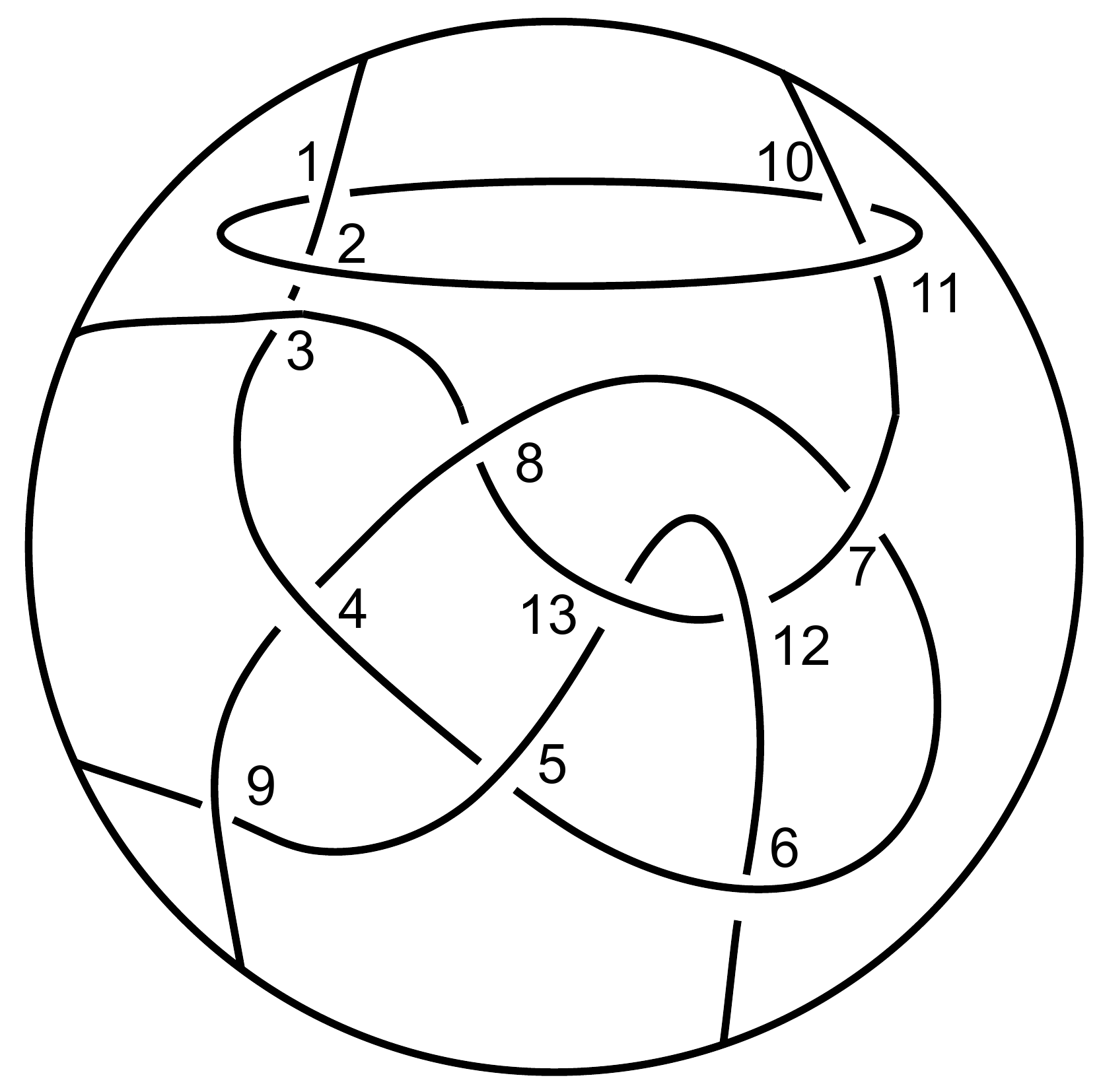}
  \caption{Illustration of Gauss code.}\label{figure:gauss_code}
\end{figure}

Consider a tangle with three open strands and one closed loop as shown in Figure \ref{figure:gauss_code}. Its Gauss code can be written as

\[
\begin{aligned}
&[[+1,-2,-3,+4,-5,+6,-7,+8,-4,+9], \\
&[+10,-11,+7,-12,+13,-8,+3], \\
&[-9,+5,-13,+12,-6], \\
&[-1,+2,+11,-10]]
\end{aligned}
\qquad
\begin{aligned}
&(o,o,o,c)
\end{aligned}
\]
\end{example}
\medskip

\subsection{Planar code}

The \textbf{planar code}, also known as the \textit{planar diagram code} or \textit{PD code}, is another way to encode a knot diagram. Unlike the Gauss code, which is based on a traversal of the knot and records crossings in the order they are encountered, the planar code captures the \textit{local structure} at each crossing by listing the four strands involved in a fixed order.

Each crossing is represented by a 4-tuple of integers:
\[
[a, b, c, d],
\]
where $a, b, c, d$ are the labels of the strands touching the crossing.

To construct a planar code:
\begin{itemize}
  \item Assign a unique integer to each strand of the knot diagram (i.e., each segment between two crossings).
  \item For each crossing, record the four adjacent strands, beginning with the incoming understrand and then listing them counterclockwise.
\end{itemize}

\noindent\textbf{Extension to Tangles.} \\
When encoding tangles, which may include both closed loops and open strands, the planar code is adapted by allowing certain strands to terminate at the boundary of the diagram. To mark such open-ended strands, we annotate their labels with a vertical bar \texttt{|}. That is, if strand label $a$ includes an endpoint, we write it as \texttt{|}$a$ or $a$\texttt{|}.

\begin{example}
\begin{figure}[h]
  \centering
  \includegraphics[width=0.15\textwidth]{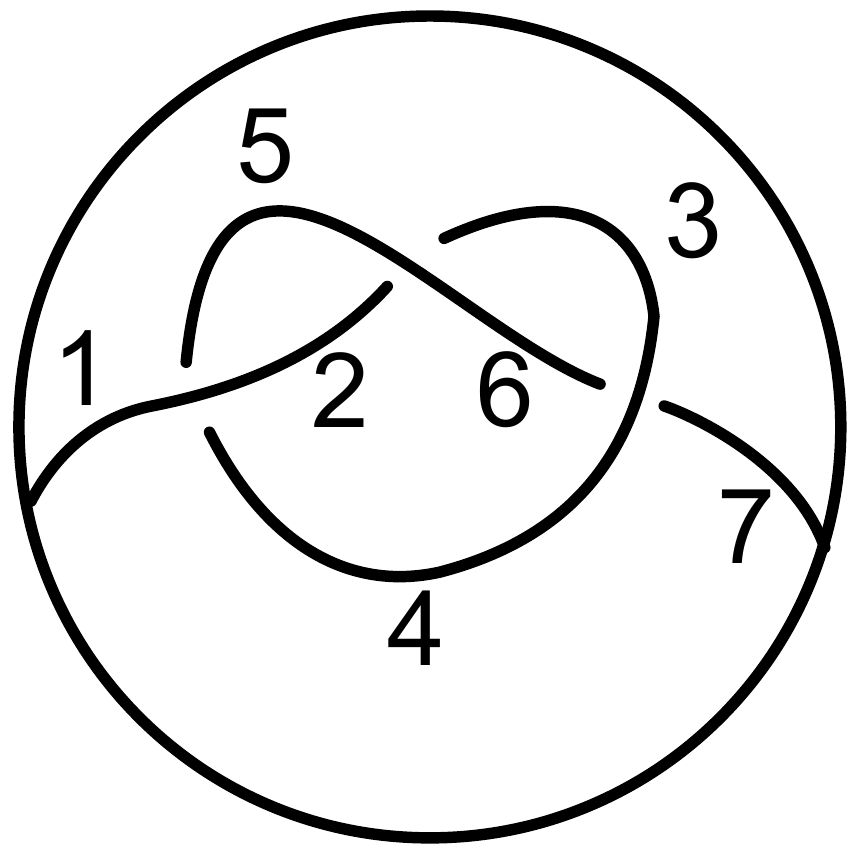}
\caption{Illustration of plannar code.}\label{figure:plannarcode}
\end{figure}

Consider a tangle with three crossings as shown in Figure \ref{figure:plannarcode}. Its planar code can be written as

\[
[[|1,4,2,5], [2,5,3,5], [6,4,7|,3]]
\]
\end{example}

\section{Computing Khovanov homology for tangles}\label{section:computation}

In this section, we present the computational procedure for determining the Khovanov homology of a tangle in the category of vector spaces.

\subsection{Khovanov homology of tangles}

The Khovanov complex is a chain complex constructed from a knot or tangle diagram by analyzing all possible smoothing states of its crossings. It categorifies the Jones polynomial by associating graded chain groups to diagrams and defining differentials between them. Each smoothing state corresponds to a basis element in the complex, and the differentials are constructed based on local transformations between states.

\subsubsection{State cube}
Let $T$ be a tangle with $n$ crossings. This gives rise to an $n$-cube $\{0,1\}^{n}$, where each vertex $s$ corresponds to a smoothing tangle $T_s$. We collect all tangles in the $n$-cube that are at the same distance from the origin as a single object. Specifically, we define
\begin{equation*}
  [[T]]^{k} = \bigoplus_{\ell(s) = k} T_s,
\end{equation*}
where $\ell(s)$ denotes the number of $1$'s in the state $s$, that is, the distance from $s$ to the origin.

We represent an edge of the $n$-cube by
\begin{equation*}
  \xi = (\xi_1, \dots, \xi_{i-1}, \star, \xi_{i+1}, \dots, \xi_{n}) \in \{0, 1, \star\}^{n},
\end{equation*}
where $\xi_j \in \{0,1\}$ and $\star$ indicates the connecting map $0 \to 1$. More precisely, the edge
\begin{equation*}
  (\xi_1, \dots, \xi_{i-1}, \star, \xi_{i+1}, \dots, \xi_{n})
\end{equation*}
connects the vertices
\begin{equation*}
  s = (\xi_1, \dots, \xi_{i-1}, 0, \xi_{i+1}, \dots, \xi_{n}) \quad \text{and} \quad s' = (\xi_1, \dots, \xi_{i-1}, 1, \xi_{i+1}, \dots, \xi_{n}).
\end{equation*}
Consequently, $\xi$ determines a morphism
\begin{equation*}
  d_{\xi} : T_s \to T_{s'}.
\end{equation*}

For a given state $s$, each position with value $0$ corresponds to an edge connecting $s$ to another state $s'$ where that position is $1$ and all other positions remain the same. Thus, there are $n - \ell(s)$ such edges $\xi$, each giving rise to a map
\begin{equation*}
  d_{\xi} : T_s \to T_{s'}.
\end{equation*}
If $T_s$ and $T_{s'}$ are not connected by $\xi$, we set $d_{\xi} = 0$ on $T_s$.

\subsubsection{Description of $d_{\xi}$}


Note that the map $d_{\xi}$ affects only the smoothing at the crossing corresponding to the $\star$ position. Locally, there are only three possible cases:

\begin{enumerate}[label=(\roman*)]
  \item The crossing involves two intersecting arcs, for example, $\raisebox{-0.2cm}{\rightcross{0.3}}$. In this case, $d_{\xi}$ transforms the two arcs as
  \begin{equation*}
    \raisebox{-0.15cm}{\hsaddle{0.3}}:\raisebox{-0.15cm}{\clrarc{0.3}}\to \raisebox{-0.15cm}{\cudarc{0.3}}.
  \end{equation*}

  \item The crossing arises from a single arc crossing over itself, for example, $\raisebox{-0.2cm}{\rcrossingarc{0.18}}$. Here, $d_{\xi}$ has two possible local transformations
  \begin{equation*}
    \raisebox{-0.15cm}{\saddle{0.3}}:\raisebox{-0.15cm}{\cudarcc{0.3}}\to \raisebox{-0.15cm}{\cudarco{0.3}}
  \end{equation*}
  and
  \begin{equation*}
    \raisebox{-0.15cm}{\hsaddle{0.3}}:\raisebox{-0.15cm}{\cudarco{0.3}}\to \raisebox{-0.15cm}{\cudarcc{0.3}}.
  \end{equation*}

  \item The crossing comes from a closed loops, for example, $\raisebox{-0.2cm}{\eight{0.18}}$. In this case, $d_{\xi}$ also has two possible local transformations
  \begin{equation*}
    \raisebox{-0.15cm}{\saddle{0.3}}:\raisebox{-0.15cm}{\longcircle{0.3}}\to \raisebox{-0.15cm}{\ccircle{0.3}}
  \end{equation*}
  and
  \begin{equation*}
    \raisebox{-0.15cm}{\hsaddle{0.3}}:\raisebox{-0.15cm}{\ccircle{0.3}}\to \raisebox{-0.15cm}{\longcircle{0.3}}.
  \end{equation*}
\end{enumerate}

Here, the boxed regions emphasize the local part of the tangle being modified, while the rest of the tangle remains unchanged.


Based on the above construction, we define
\begin{equation*}
  d^k = \sum_{|\xi|=k} (-1)^{\mathrm{sgn}(\xi)} d_{\xi} : [[T]]^{k} \to [[T]]^{k+1},
\end{equation*}
where $|\xi|$ denote the number of $1$s in $\xi$, and the sign $\mathrm{sgn}(\xi)$ is determined by the number of $1$s in $\xi$ appearing before the first $\star$.

Consequently, we obtain a sequence of morphisms between the direct sum collections of tangle diagrams
\begin{equation*}
  \xymatrix{
  [[T]]^{0}\ar[r]^{d^{0}} & [[T]]^{1}\ar[r] & \cdots \ar[r] & [[T]]^{n-1}\ar[r]^{d^{n-1}} & [[T]]^{n}.
  }
\end{equation*}

It is straightforward to verify that $d^{k+1} \circ d^{k} = 0$ for $k = 0,1,\dots,n-1$.

\subsubsection{The construction $\mathcal{G}$}

To obtain a cochain complex in the usual sense, we proceed with the following construction.

Recall that $T_s$ consists of a disjoint union of circles and arcs. We now define the functor $\mathcal{G}$. Let $\mathbb{K}$ be a coefficient ring or field, for instance, the integers $\mathbb{Z}$, a finite field $\mathbb{Z}/p$, or the real numbers $\mathbb{R}$.

Let $V$ be the $\mathbb{K}$-vector space spanned by the elements $v_{+}$ and $v_{-}$, and let $W$ be the $\mathbb{K}$-vector space spanned by the element $w$. For a tangle $T$ consisting of $r$ circles and $t$ arcs, we define
\begin{equation*}
  \mathcal{G}(T) = \underbrace{W \otimes_{\mathbb{K}} \cdots \otimes_{\mathbb{K}} W}_{t} \otimes \underbrace{V \otimes_{\mathbb{K}} \cdots \otimes_{\mathbb{K}} V}_{r}.
\end{equation*}
Note that $V = \mathbb{K}\{v_{+}, v_{-}\}$ and $W = \mathbb{K}\{w\}$. Therefore, $\mathcal{G}(T)$ is a $2^{r}$-dimensional $\mathbb{K}$-vector space.

The functor $\mathcal{G}$ is defined as follows:
\begin{equation*}
  \begin{split}
      &  \mathcal{G}(\raisebox{-0.15cm}{\hsaddle{0.3}}:\raisebox{-0.15cm}{\clrarc{0.3}}\to \raisebox{-0.15cm}{\cudarc{0.3}}): W\otimes W \to W\otimes W, \quad w\otimes w \mapsto 0,\\
      &  \mathcal{G}(\raisebox{-0.15cm}{\saddle{0.3}}:\raisebox{-0.15cm}{\cudarcc{0.3}}\to \raisebox{-0.15cm}{\cudarco{0.3}}): W \to W\otimes V, \quad w \mapsto w\otimes v_{-},\\
      &  \mathcal{G}(\raisebox{-0.15cm}{\hsaddle{0.3}}:\raisebox{-0.15cm}{\cudarco{0.3}}\to \raisebox{-0.15cm}{\cudarcc{0.3}} ): W\otimes V \to W, \quad \left\{
    \begin{array}{ll}
      w \otimes v_{+} \mapsto w, \\
      w \otimes v_{-} \mapsto 0,
    \end{array}
  \right.\\
      &\mathcal{G}(\raisebox{-0.15cm}{\saddle{0.3}}:\raisebox{-0.15cm}{\longcircle{0.3}}\to \raisebox{-0.15cm}{\ccircle{0.3}}): V\otimes V\to V,\quad \left\{
                                                                                                                                                        \begin{array}{ll}
                                                                                                                                                          v_{+}\mapsto v_{+}\otimes v_{-}+v_{-}\otimes v_{+},  \\
                                                                                                                                                          v_{-}\mapsto v_{-}\otimes v_{-},
                                                                                                                                                        \end{array}
                                                                                                                                                      \right.
      \\
      &\mathcal{G}(\raisebox{-0.15cm}{\hsaddle{0.3}}: \raisebox{-0.15cm}{\ccircle{0.3}}\to \raisebox{-0.15cm}{\longcircle{0.3}}: V\to V\otimes V,\quad \left\{
                                                                                                                                                         \begin{array}{ll}
                                                                                                                                                           v_{+}\otimes v_{+}\mapsto v_{+}, \\
                                                                                                                                                           v_{+}\otimes v_{-}\mapsto v_{-}, \\
                                                                                                                                                           v_{-}\otimes v_{+}\mapsto v_{-}, \\
                                                                                                                                                           v_{-}\otimes v_{-}\mapsto 0.
                                                                                                                                                         \end{array}
                                                                                                                                                       \right.
  \end{split}
\end{equation*}

\subsubsection{Khovanov homology}

In our work, the construction of Khovanov homology is heavily based on the functor $\mathcal{G}$, which assigns linear spaces to smoothing tangles, thus providing a practical framework for computing homology. The functor $\mathcal{G}$ can be regarded as a generalization of constructions from topological quantum field theory for knots and links \cite{khovanov2000categorification}.

Let us denote
\begin{equation*}
  Kh^{k}(T) = \mathcal{G}([[T]]^{k+n_{-}}), \quad \text{and} \quad d_{T}^{k} = \mathcal{G}(d^{k+n_{-}}).
\end{equation*}
This gives rise to the sequence
\begin{equation*}
  \xymatrix{
  Kh^{-n_{-}}(T)\ar[r]^{d^{-n_{-}}_{T}} & Kh^{1-n_{-}}(T)\ar[r] & \cdots \ar[r] & Kh^{n_{+}-1}(T)\ar[r]^{d^{n_{+}-1}_{T}} & Kh^{n_{+}}(T).
  }
\end{equation*}

\begin{proposition}[{\cite[Proposition 3.2]{liu2024persistent}}]
The construction $(Kh^{\ast}(T),d_{T}^{\ast})$ is a cochain complex.
\end{proposition}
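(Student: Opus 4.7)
The plan is to establish $d_T^{k+1} \circ d_T^k = 0$ by reducing to a verification on each 2-dimensional face of the state cube $\{0,1\}^n$. The preceding paragraph already records that the tangle-level differential satisfies $d^{k+1} \circ d^k = 0$ as a formal signed sum of cobordisms, so the proposition amounts to showing that the assignment $\mathcal{G}$, defined only on the elementary saddle generators, respects this relation when extended to signed sums.

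First I would unpack the composition as
\[
d_T^{k+1} \circ d_T^k \;=\; \sum_{|\xi|=k,\ |\eta|=k+1} (-1)^{\mathrm{sgn}(\xi)+\mathrm{sgn}(\eta)}\, \mathcal{G}(d_\eta) \circ \mathcal{G}(d_\xi),
\]
and observe that the only nonzero summands are those where $\xi$ and $\eta$ place their $\star$'s in two distinct positions $i \ne j$ and agree on the remaining coordinates. Such data is precisely a 2-face of the cube, traversed in one of two orders: either change coordinate $i$ first and then $j$, or vice versa. A direct count of the $1$'s preceding each $\star$ shows that the two sign exponents attached to a given 2-face differ in parity by exactly one, so the two summands appear with opposite signs and will cancel provided the underlying square commutes after applying $\mathcal{G}$.

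The heart of the argument is then to verify, for every pair of crossings $(i,j)$, that
\[
\mathcal{G}(d_\eta) \circ \mathcal{G}(d_\xi) \;=\; \mathcal{G}(d_{\eta'}) \circ \mathcal{G}(d_{\xi'}),
\]
where $(\xi,\eta)$ and $(\xi',\eta')$ are the two paths around the corresponding 2-face. This is a local computation in the tangle near the two crossings. The easy case occurs when the local modifications at crossings $i$ and $j$ act on disjoint collections of arcs and circles; commutativity is then immediate from the bifunctoriality of $\otimes$, since the two maps operate on disjoint tensor factors. The substantive case is when the two crossings share at least one component, so that resolving one crossing alters the number or type (arc versus circle) of pieces that enter the other saddle.

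The main obstacle is therefore the case analysis for interacting crossings. One enumerates the possibilities according to whether each saddle is of the arc–arc, arc-self, or circle-self type listed in the description of $d_\xi$, and according to how the four local ends are grouped into components after both resolutions. The resulting compatibility reduces to a short list of algebraic identities: on circle factors $V$, these are exactly the Frobenius identities for $V \cong \mathbb{K}[x]/(x^2)$ that underlie Khovanov's original construction for links; on arc factors $W$, they are the degenerate identities built into $\mathcal{G}$, notably $w \otimes w \mapsto 0$ and $w \otimes v_- \mapsto 0$, which force commutativity in every configuration in which arcs and circles interact. Once all local squares are checked and combined with the sign cancellation established above, the identity $d_T^{k+1} \circ d_T^k = 0$ follows.
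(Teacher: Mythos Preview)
The paper does not actually prove this proposition; it simply records it as \cite[Proposition 3.2]{liu2024persistent} and proceeds. Your proposal therefore supplies what the paper omits, and it is the standard argument one would expect in the cited reference: expand $d_T^{k+1}\circ d_T^{k}$ as a double sum, group terms by 2-faces of $\{0,1\}^n$, verify that the two edge-paths around each face carry opposite signs, and check that the underlying square of linear maps commutes after applying $\mathcal{G}$. Your sign computation and the disjoint-tensor-factor case are both correct.

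The one place where your plan gestures rather than executes is the interacting case involving arc factors. For circles alone this is the classical Frobenius-algebra verification for $\mathbb{K}[x]/(x^2)$, but the mixed arc/circle squares are precisely what is new for tangles, and the identity $w\otimes w\mapsto 0$ together with $w\otimes v_{-}\mapsto 0$ and $w\mapsto w\otimes v_{-}$ must be checked against every way two saddles can share components. You name the right ingredients, but a complete proof would list the finitely many local configurations (e.g.\ split-then-merge versus merge-then-split on an arc, two successive arc--arc saddles, an arc--arc saddle followed by an arc-circle split, and so on) and verify each one. This is routine but is the substantive content beyond the link case, so in a full write-up it should not be left implicit.
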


\begin{definition}
The Khovanov homology of a tangle $T$ is defined as the cohomology of the cochain complex $(Kh^{\ast}(T), d_{T}^{\ast})$. More precisely,
\[
   H^{k}(T) = H^{k}(Kh^{\ast}(T), d_{T}^{\ast}), \quad k \in \mathbb{Z}.
\]
\end{definition}

\subsubsection{Quantum grading}
Recall that $V = \mathbb{K}\{v_{+}, v_{-}\}$ and $W = \mathbb{K}\{w\}$. We define the \textbf{quantum grading} $\theta$ by
\begin{equation*}
  \theta(v_{+}) = 1, \quad \theta(v_{-}) = \theta(w) = -1,
\end{equation*}
and extend it to tensor products via
\begin{equation*}
  \theta(x \otimes y) = \theta(x) + \theta(y).
\end{equation*}
Consequently, any generator obtained as a tensor product of $v_{+}$, $v_{-}$, and $w$ has a well-defined quantum grading. This quantum grading provides an additional filtration that enriches the structure of the Khovanov homology. This quantum grading provides an additional filtration that enriches the structure of the Khovanov homology. Consequently, it can offer extra features for data applications and enhance their practical potential.

Furthermore, for a generator $[x]$ of the Khovanov homology, we define its quantum grading by
\begin{equation*}
  \Phi([x]) = p(x) + n_{+} - n_{-} + \theta(x),
\end{equation*}
where $p(x)$ denotes the homological degree corresponding to $[x]$.

\subsection{Theorems for computation}

\subsubsection{Orientation}

We have established the basic notions of the Khovanov homology of tangles. We now describe its computation. In the computational process, determining whether a crossing in a tangle is right-handed or left-handed requires specifying an orientation of the tangle.

For knots, the Khovanov homology is invariant under changes of orientation. However, for links and tangles, this invariance does not necessarily hold.

\begin{definition}
Let $T$ be a tangle.
Two orientations on $T$ are said to be \textbf{equivalent} if, for every crossing, the type of the crossing remains unchanged; that is, the sign (positive or negative) of each crossing is preserved.
\end{definition}

\begin{definition}
Let $T$ be a tangle. The \textbf{sign type} of $T$ is a function $\sigma: \mathcal{X}(T)\to \{+,-\}$ from the set of crossings to the set $\{+,-\}$.
\end{definition}

For simplicity, once the ordering of the crossings is fixed, we may regard \(\sigma\) as an element of \(\{+,-\}^n\), where \(n\) denotes the number of crossings of \(T\). We denote by \(n_{+}(\sigma)\) and \(n_{-}(\sigma)\) the numbers of \(+\) and \(-\) signs in \(\sigma\), respectively.
If $T$ is a knot, for any two sign types $\sigma$ and $\tau$ of $T$, we have
\begin{equation*}
  \sigma(T) = \tau (T).
\end{equation*}
For links or tangles, the sign types are not fixed.

\begin{example}
For the trefoil knot, different orientations do not change its sign markings, whereas for the Hopf link, different orientations may alter the sign markings, as illustrated in Figure \ref{figure:orientation}.
\begin{figure}[h]
  \centering
  \includegraphics[width=0.25\textwidth]{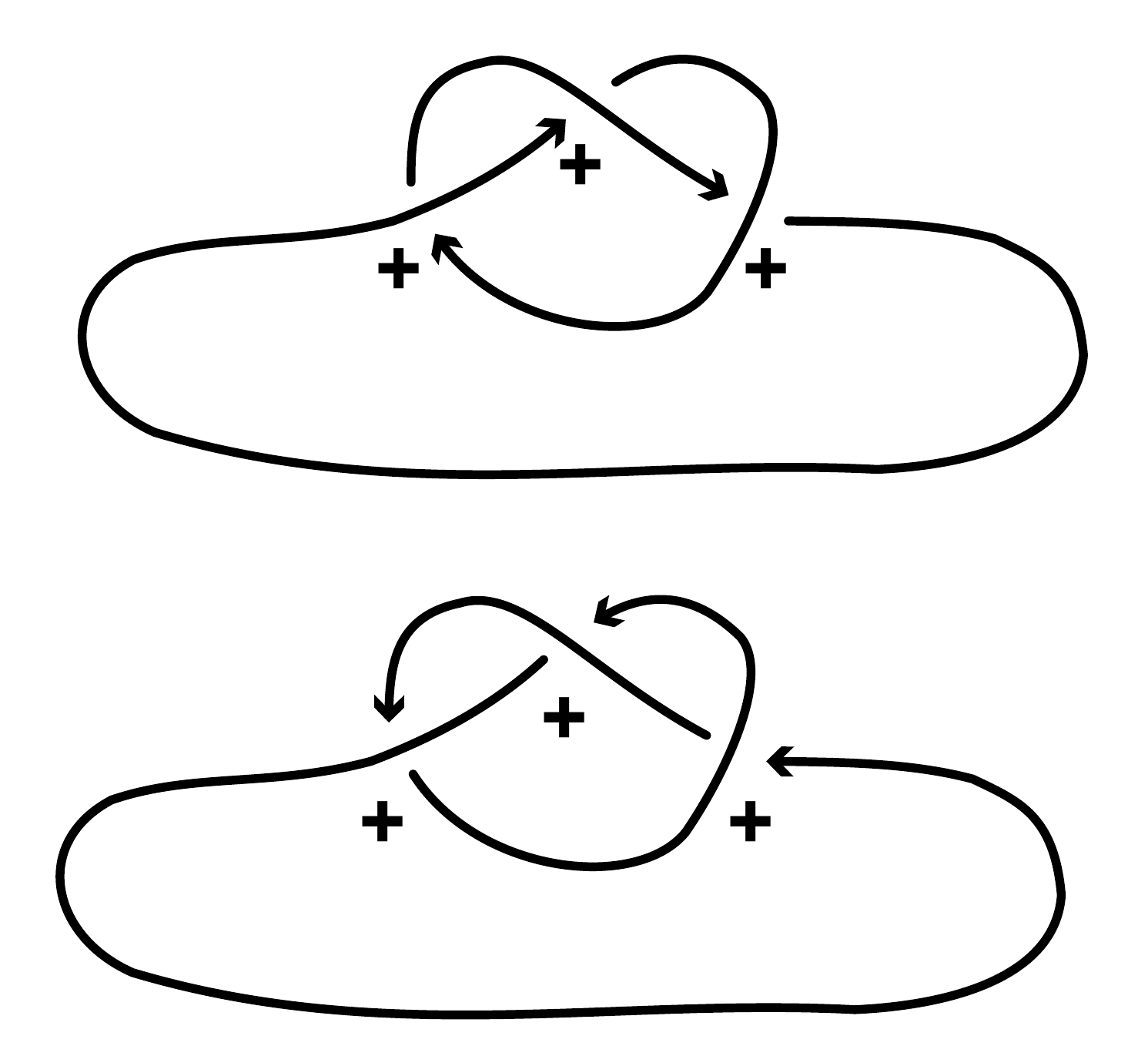}\qquad
  \includegraphics[width=0.18\textwidth]{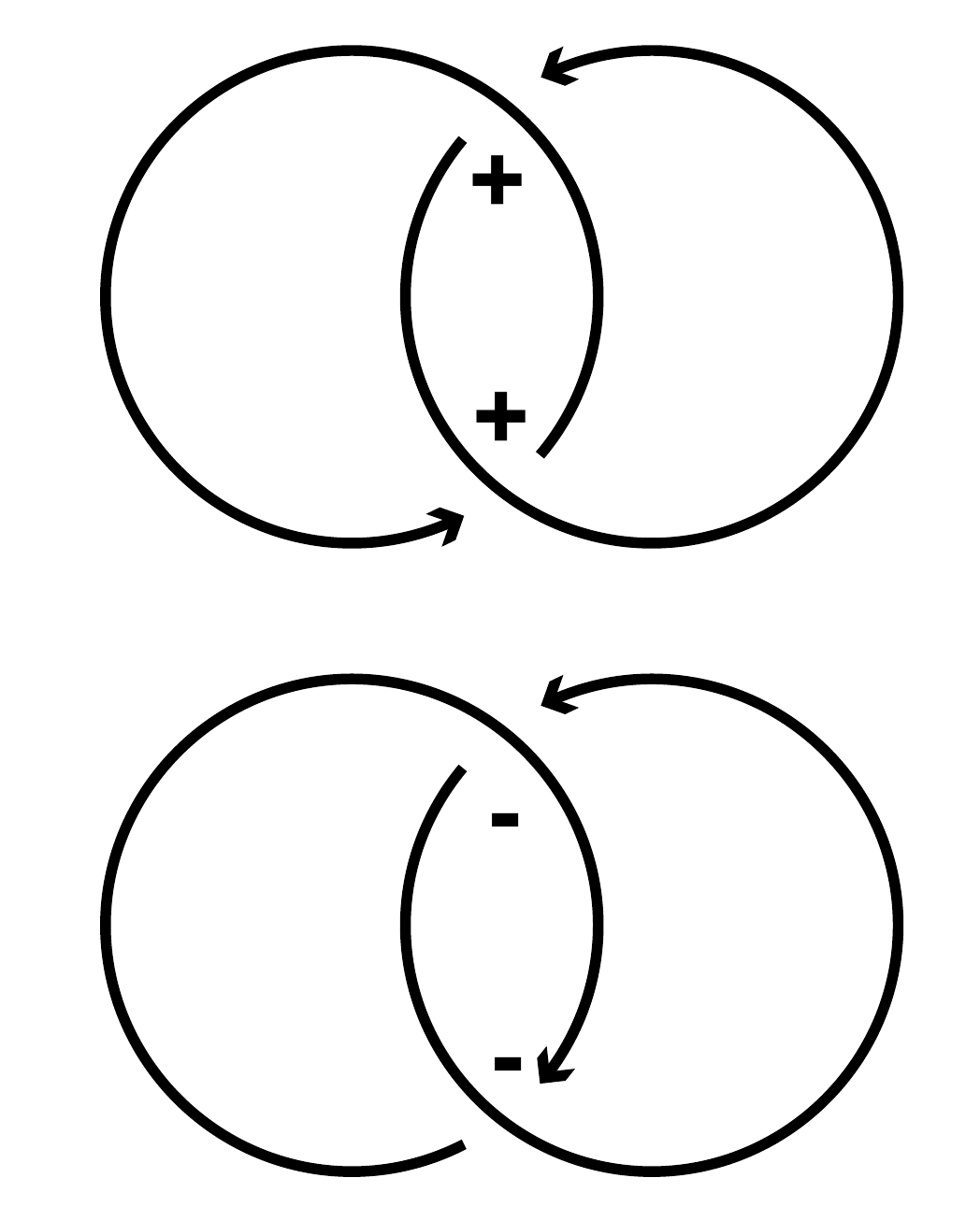}
  \caption{The sign markings on the trefoil and Hopf link with different orientations.}\label{figure:orientation}
\end{figure}
\end{example}

\begin{theorem}\label{theorem:sign}
Suppose two orientations of \(T\) have the same sign type. Then their Khovanov homology are isomorphic. Moreover, the corresponding generators have the same height and quantum grading.
\end{theorem}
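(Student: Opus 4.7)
The plan is to trace through the construction of $(Kh^{\ast}(T), d_T^{\ast})$ and isolate exactly where the orientation of $T$ is used. The claim will follow once it is observed that orientation enters only through the two integers $n_{+}$ and $n_{-}$, which by hypothesis coincide for the two orientations.

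First, I would point out that the state cube $\{0,1\}^n$ and the smoothing tangles $T_s$ depend only on the underlying unoriented diagram. Indeed, the $0$- and $1$-smoothings were defined at undercrossings $\raisebox{-0.15cm}{\leftcross{0.2}}$ using the over/under information alone, with no reference to arrows. Hence the graded object $[[T]]^{k} = \bigoplus_{\ell(s)=k} T_s$ is the same for both orientations. Next, the edge morphisms $d_{\xi}$ are determined by the three local cases (i)--(iii), each of which depends only on the unoriented combinatorics of the smoothed picture (arcs, single-arc self-crossing, or closed loop). The signs $(-1)^{\mathrm{sgn}(\xi)}$ in $d^{k} = \sum_{|\xi|=k} (-1)^{\mathrm{sgn}(\xi)} d_{\xi}$ are fixed by the ordering of the crossings, not by any orientation. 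Finally, the functor $\mathcal{G}$ is defined on unoriented smoothing tangles and their local cobordisms. Thus the pair $(\mathcal{G}([[T]]^{\bullet}), \mathcal{G}(d^{\bullet}))$ is an invariant of the unoriented diagram.

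With this in place, the orientation enters the definition of Khovanov homology in exactly two spots: the homological shift
\[
Kh^{k}(T) = \mathcal{G}([[T]]^{k+n_{-}}),
\]
and the quantum grading shift
\[
\Phi([x]) = p(x) + n_{+} - n_{-} + \theta(x).
\]
Both depend on the orientation only via the numbers $n_{+}$ and $n_{-}$. Since two orientations sharing a sign type $\sigma$ produce, by definition, the same $n_{+}(\sigma)$ and $n_{-}(\sigma)$, the two cochain complexes $(Kh^{\ast}(T), d_T^{\ast})$ are literally equal; in particular their cohomologies are isomorphic (indeed identical), and the homological degree $p(x)$ together with the intrinsic grading $\theta(x)$ of each generator is unchanged, so the quantum grading $\Phi([x])$ is also preserved.

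There is no serious obstacle here: the argument is a line-by-line check that each ingredient of the construction is either purely diagrammatic (and therefore orientation-free) or depends on orientation only through $n_{\pm}$. The only point that deserves explicit mention is the convention that smoothings are performed at undercrossings, which is what makes the state cube and its differentials manifestly independent of the choice of arrows along the strands.
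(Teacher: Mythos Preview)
Your proposal is correct and follows essentially the same approach as the paper: both argue that the underlying sequence $[[T]]^0 \to \cdots \to [[T]]^n$ is orientation-independent, so that the only orientation-dependent data are the shifts by $n_{-}$ and $n_{+}-n_{-}$, which coincide for orientations sharing the same sign type. Your version is simply more explicit about \emph{why} each ingredient of the construction is orientation-free, whereas the paper asserts this in a single sentence.
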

\begin{proof}
Note that, regardless of the orientation, the underlying sequence is the same
\begin{equation*}
  \xymatrix{
  [[T]]^{0}\ar[r]^{d^{0}} & [[T]]^{1}\ar[r] & \cdots \ar[r] & [[T]]^{n-1}\ar[r]^{d^{n-1}} & [[T]]^{n}.
  }
\end{equation*}
Since the height shift is given by \(Kh^{k}(T) = \mathcal{G}([[T]]^{k+n_{-}})\)
and the orientations share the same sign type, it follows that their heights are equal.
Similarly, their quantum gradings coincide.
\end{proof}

In view of Theorem \ref{theorem:sign}, we can represent a tangle with a given orientation as \((T,\sigma)\), which uniquely determines its Khovanov homology. In this notation, we denote by \(H(T,\sigma)\) the Khovanov homology.

\begin{theorem}
Let $T$ be a tangle. Let $\sigma$ and $\tau$ be two sign types of $T$. Then we have an isomorphism
\begin{equation*}
   \Sigma: H^{k}(T,\sigma) \xrightarrow{\cong} H^{k+n_{-}(\tau)-n_{-}(\sigma)}(T,\tau),\quad k\in \mathbb{Z}.
\end{equation*}
Moreover, for any generator $[x]\in H^{k}(T,\sigma)$, we have
\begin{equation*}
  \Phi([x]) =  \Phi(\Sigma([x])) + 3n_{-}(\tau) - 3n_{-}(\sigma).
\end{equation*}
Here, $\Phi$ denote the quantum grading.
\end{theorem}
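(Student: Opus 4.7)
The plan is to exploit the observation, already used in the proof of Theorem \ref{theorem:sign}, that the cube of resolutions $\{[[T]]^m\}_{m=0}^{n}$, the local cobordism-defined maps $d_{\xi}$, and the functor $\mathcal{G}$ are all intrinsic to the unoriented diagram: none of them see the orientation. The sign type $\alpha$ enters only through the index shift $Kh^{k}(T,\alpha) = \mathcal{G}([[T]]^{k + n_{-}(\alpha)})$. Changing from $\sigma$ to $\tau$ therefore amounts to nothing more than relabeling the homological degrees of the same underlying cochain complex.

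Concretely, I would define $\Sigma$ at the chain level as the identity on the common module $\mathcal{G}([[T]]^{m})$, read through the two different indexings: an element sitting in degree $k$ under $\sigma$ lives in $\mathcal{G}([[T]]^{k + n_{-}(\sigma)})$, and I reassign it to the degree it occupies under the $\tau$-indexing. Because the differentials $d_{T}^{*}$ are literally the same maps on $[[T]]$ in both complexes, $\Sigma$ commutes with them tautologically, so it is a chain isomorphism and descends to an isomorphism on cohomology. The shift in homological grading is then read off directly as the difference of the two index shifts, giving the stated degree displacement.

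For the quantum grading identity, I would substitute into the formula
\[
   \Phi([x]) = p(x) + n_{+} - n_{-} + \theta(x)
\]
and compare the two values. The internal grading $\theta$ is defined purely on the algebraic generators $v_{\pm}, w$ of $\mathcal{G}(T_s)$ and is manifestly independent of orientation, so $\Sigma$ preserves it. The homological degree $p$ shifts by the amount computed in the previous paragraph. The correction $n_{+} - n_{-}$ also changes with the sign type: since the total crossing number $n = n_{+} + n_{-}$ is fixed, the quantity $n_{+} - n_{-}$ shifts by $-2$ times the change in $n_{-}$. Summing the three contributions (internal grading, homological shift, orientation correction) yields a total factor of three times the change in $n_{-}$.

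The main obstacle is the sign bookkeeping rather than anything deep. Both the homological-degree shift and the shift in $n_{+} - n_{-}$ are linear in $n_{-}(\tau) - n_{-}(\sigma)$, and they must be combined with correct signs to reproduce the asserted coefficient of $3$. A minor secondary point to verify is that the identity map really is well-defined across the two indexings for every $m$ in the range that contributes to cohomology; this is immediate because both complexes have chain groups supported precisely at the $n+1$ cube levels $m = 0, 1, \dots, n$.
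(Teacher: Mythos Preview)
Your proposal is correct and follows essentially the same approach as the paper: both arguments observe that the underlying sequence $[[T]]^{0}\to\cdots\to[[T]]^{n}$ and the functor $\mathcal{G}$ are orientation-independent, define $\Sigma$ as the identity on $\mathcal{G}([[T]]^{m})$ viewed through the two indexings, and then track the quantum grading shift via the formula $\Phi([x])=p(x)+n_{+}-n_{-}+\theta(x)$. The only cosmetic difference is that the paper first rewrites this as $\Phi([x])=\ell(x)+n-3n_{-}+\theta(x)$ (with $\ell(x)$ the cube level), so that the coefficient $3$ appears in one step, whereas you obtain it by summing the three contributions separately; this is the same computation.
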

\begin{proof}
First, note that \((T,\sigma)\) and \((T,\tau)\) share the same underlying sequence:
\begin{equation*}
  \xymatrix{
  [[T]]^{0}\ar[r]^{d^{0}} & [[T]]^{1}\ar[r] & \cdots \ar[r] & [[T]]^{n-1}\ar[r]^{d^{n-1}} & [[T]]^{n}.
  }
\end{equation*}
By the height shift, we have
\begin{equation*}
  k_{\sigma} + n_{-}(\sigma) = k_{\tau} + n_{-}(\tau).
\end{equation*}
Hence, the map \(\Sigma\) is a height shift by \(n_{-}(\tau)-n_{-}(\sigma)\), sending \([x]\) to \([x]\) in the shifted degree.

From the formula for the quantum grading, we obtain
\begin{equation*}
  \Phi([x]) = \ell(x) + n - 3n_{-} + \theta(x),
\end{equation*}
where $n$ is the number of crossings and \(\ell(x)\) denotes the length of the state corresponding to \(x\).
It follows that
\begin{equation*}
  \Phi([x]) + 3n_{-}(\sigma) = \Phi(\Sigma([x])) + 3n_{-}(\tau),
\end{equation*}
which gives the desired result.
\end{proof}

Although differing by a global reversal of orientation, the Khovanov homology of a tangle under different orientations are isomorphic; however, the quantum gradings are shifted uniformly by a constant. This motivates us to normalize the quantum grading. For convenience, we may shift the quantum grading so that the generator with the lowest quantum degree is assigned degree zero. A more natural choice is to shift the quantum grading so that the generators of a certain unoriented invariant subspace have quantum degree zero. Here, the unoriented subspace refers to the part of the tangle complex that remains unaffected by the choice of orientation.

\subsubsection{Reidemeister move invariance}

Let us first consider the following result. The following proposition provides a computational proof verifying the invariance of Khovanov homology of  twisted tangles under $R1$ move.

\begin{proposition}
Let \(T\) be a $d$-twisted tangle, which refers to a tangle obtained by twisting a single arc in a given direction.
Then we have
\[
H^{k}(T) \cong \left\{
             \begin{array}{ll}
               \mathbb{K}\{x\}, & \hbox{$k=0$;} \\
               0, & \hbox{otherwise,}
             \end{array}
           \right.
\]
where $x$ is a generator of quantum grading $1$.
\begin{figure}[h]
  \centering
  \includegraphics[width=0.3\textwidth]{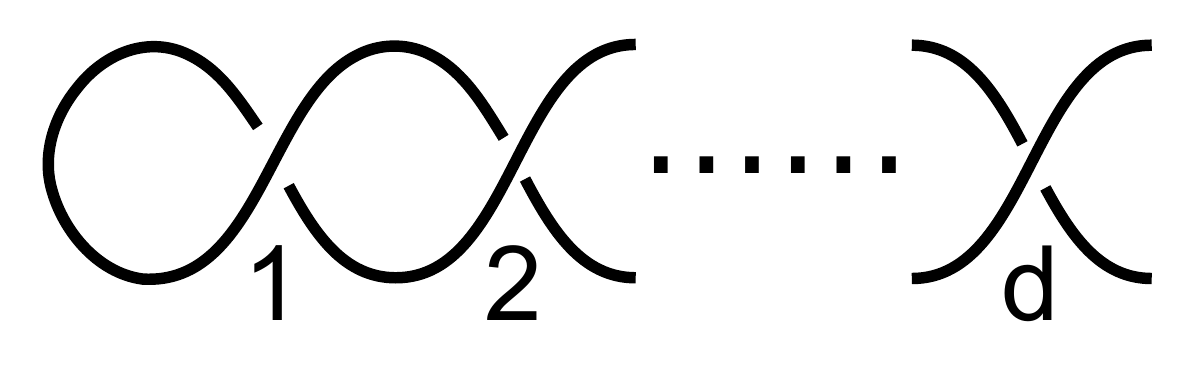}
  \caption{Illustration of a $d$-twisted tangle.}\label{figure:twist_tangle}
\end{figure}
\end{proposition}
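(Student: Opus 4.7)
The plan is to proceed by induction on the number of twists $d$, which amounts to an algebraic verification of Reidemeister I invariance built directly into the functor $\mathcal{G}$.

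For the base case $d = 0$, $T$ is a single unknotted arc with no crossings. The state cube is trivial, so $Kh^{0}(T) = \mathcal{G}(\mathrm{arc}) = W = \mathbb{K}\{w\}$ while $Kh^{k}(T) = 0$ for $k \neq 0$, and the claim is immediate.

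For the inductive step, I would fix $d \ge 1$ and assume the claim for the $(d-1)$-twisted arc $T'$. Let $c$ denote the outermost crossing of $T$. Because the $d$ twists are arranged in series, the two smoothings of $c$ produce, respectively, $T'$ and $T' \sqcup S^{1}$. Consequently the state cube $\{0,1\}^{d}$ decomposes along the last coordinate as $\{0,1\}^{d-1} \times \{0,1\}$, and $Kh^{*}(T)$ is (up to a grading shift) the mapping cone of a chain map
\begin{equation*}
f : Kh^{*}(T') \longrightarrow Kh^{*}(T' \sqcup S^{1}) \cong Kh^{*}(T') \otimes V
\end{equation*}
induced by the saddle cobordism at $c$. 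By the definition of $\mathcal{G}$ on a type-(ii) local transformation, $f$ is either the splitting $y \mapsto y \otimes v_{-}$ (which is injective) or the merging $y \otimes v_{+} \mapsto y$, $y \otimes v_{-} \mapsto 0$ (which is surjective), according to which smoothing of $c$ contains the circle. In either case the long exact sequence of the mapping cone degenerates, yielding
\begin{equation*}
H^{*}(T) \cong H^{*}(T') \otimes \mathbb{K}\{v_{\pm}\},
\end{equation*}
so by the inductive hypothesis $H^{*}(T)$ is one-dimensional and concentrated in a single homological degree.

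The quantum grading assertion is verified by tracking, at each step of the induction, that the new tensor factor contributes $\theta(v_{\pm}) = \pm 1$ while the additional crossing shifts $n_{\pm}$ by $1$ and the cone shifts the homological degree by $\pm 1$; these contributions cancel in the formula $\Phi([x]) = p(x) + n_{+} - n_{-} + \theta(x)$, preserving the quantum grading of the generator under the induction. The main obstacle, in my view, will be the geometric step of showing that the outermost crossing can indeed be isolated so that one of its smoothings yields exactly $T' \sqcup S^{1}$ with the identified disjoint circle, together with pinning down $f$ as the specific splitting or merging map provided by $\mathcal{G}$; once these identifications are in hand, the remainder of the argument is formal algebra via the long exact sequence.
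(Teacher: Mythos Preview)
Your proposal is correct and follows essentially the same route as the paper: induction on $d$, isolating one crossing whose two smoothings yield $T'$ and $T'\sqcup S^{1}$, and then reading off the homology from the long exact sequence of the resulting mapping cone/short exact sequence, with the split or merge map supplied by $\mathcal{G}$ controlling the connecting homomorphism. The paper picks the leftmost crossing, writes out the right-handed case explicitly (declaring the left-handed case analogous), and identifies the surviving generator as $v_{-}\otimes x$ with quantum grading $-1$; apart from these cosmetic choices the arguments coincide.
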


\begin{proof}
As illustrated in Figure \ref{figure:twist_tangle}, we may always position the tangle so that its endpoints lie on the right. In this arrangement, the leftmost crossing can be either right-handed or left-handed. We prove the result by induction on $d$.
For $d = 0$, the tangle has no crossings, and its Khovanov homology is concentrated in degree $0$ with generator $[w]$ of quantum grading $-1$.
Assume that for $d = m-1$ we have
\[
H^{k}(T) \cong
\begin{cases}
\mathbb{K}\{x\}, & k = 0, \\
0, & \text{otherwise,}
\end{cases}
\]
where the generator $x$ has quantum grading $-1$.
We now prove the statement for $d = m$.

In the proof below, we restrict to the case where the leftmost crossing is right-handed; the case of a left-handed crossing can be treated analogously.

Let us denote by \(T^{+}(d)\) the tangle where the superscript \(+\) indicates that the leftmost crossing is right-handed, and \(d\) denotes the total number of crossings. Examining the smoothings of the leftmost crossing, we have
\begin{align*}
    & T^{+}(d)_{(0,s)} = \bigcirc \sqcup T^{-}(d-1)_{s}, \\
    & T^{+}(d)_{(1,s)} = T(d-1)_{s},
\end{align*}
where \(T^{-}(d-1)_{s}\) denotes the tangle with \(d-1\) crossings whose leftmost crossing is left-handed, and $\bigcirc$ denotes the single circle. Here, \(s\) is a state of \(T(d-1)\), while \((0,s)\) and \((1,s)\) are the states of \(T^{+}(d)\) obtained by taking the \(0\)-smoothing or \(1\)-smoothing at the leftmost crossing, respectively.

Thus, we obtain
\begin{equation*}
  \bigoplus\limits_{\ell(s')=k} T^{+}(d)_{s'} = \left(\bigoplus\limits_{\ell(s)=k} \bigcirc \sqcup T^{-}(d-1)_{s} \right)\oplus \left(\bigoplus\limits_{\ell(s)=k-1}  T^{-}(d-1)_{s} \right).
\end{equation*}
Recall that $Kh^{k}(T) = \mathcal{G}([[T]]^{k+n_{-}})$ and $\quad d_{T}^{k} = \mathcal{G}(d^{k+n_{-}})$. Thus, we have
\begin{equation*}
  Kh^{k}(T^{+}(d))  = \left( V\otimes Kh^{k}(T^{-}(d-1))\right) \oplus \left(Kh^{k-1}(T^{-}(d-1))\right).
\end{equation*}
Note that the differential on $Kh^{k-1}(T^{-}(d-1))$ is closed, that is,
\begin{equation*}
  d_{T^{-}(d-1)}\left(Kh^{k-1}(T^{-}(d-1))\right)\subseteq Kh^{k-1}(T^{-}(d-1)).
\end{equation*}
Hence, $Kh^{k-1}(T^{-}(d-1))$ is a sub cochain complex of $Kh^{k}(T^{+}(d))$. Thus, we have a short exact sequence
\begin{equation*}
  \xymatrix{
  0\ar[r] & Kh^{k-1}(T^{-}(d-1)) \ar[r]& Kh^{k}(T^{+}(d)) \ar[r]& (A,\bar{d})\ar[r] &0.
  }
\end{equation*}
where $A= Kh^{k}(T^{+}(d))/ Kh^{k-1}(T^{-}(d-1))$ is the quotient cochain complex with the induced differential $\bar{d}$. The short exact sequence induces a long exact sequence of homology
\begin{equation*}
  \xymatrix{
  \cdots\ar[r] & H^{-1}(T^{-}(d-1)) \ar[r]& H^{0}(T^{+}(d)) \ar[r]& H^{0}(A)\ar[r]^-{\delta} &  H^{0}(T^{-}(d-1))\ar[r]  &\cdots.
  }
\end{equation*}
By the induction hypothesis, we have $H^{k}(T^{-}(d-1))=0$ for $k\neq 0$. Note that
\begin{equation*}
  H^{0}(A) = V\otimes H^{0}(T^{-}(d-1)).
\end{equation*}
Hence, for $k\neq 0,1$, the long exact sequence yields the short exact sequence
\begin{equation*}
  \xymatrix{
    H^{k-1}(T^{-}(d-1))=0 \ar[r]& H^{k}(T^{+}(d)) \ar[r]& H^{k}(A)=0.
  }
\end{equation*}
which implies $H^{k}(T^{+}(d))=0$ for $k\neq 0,1$.

Let us examine the connecting homomorphism
\begin{equation*}
  \delta: H^{0}(A) \to H^{0}(T^{-}(d-1)),\quad [z]\mapsto [d_{T^{+}(d)}z].
\end{equation*}
By definition, any $z\in A$ can be written as either $v_{+}\otimes x$ or $v_{-}\otimes x$ for some $x\in Kh^{k-1}(T^{-}(d-1))$. Recall the maps
\begin{align*}
    & V\times V\to V,\quad  v_{+}\otimes v_{\pm} \mapsto v_{\pm},\\
    & V\times W\to W, \quad  v_{+}\otimes w \mapsto w.
\end{align*}
It follows that
\begin{equation*}
  d_{T^{+}(d)}(v_{+}\otimes x) = x + d_{T^{-}(d-1)}x.
\end{equation*}
If $[x]\in H^{0}(T^{-}(d-1))$, then $d_{T^{-}(d-1)}x=0$ and thus
\begin{equation*}
  d_{T^{+}(d)}(v_{+}\otimes x) = x.
\end{equation*}
Therefore, $[v_{+}\otimes x]$ is a generator of $H(A)$, and
\begin{equation*}
  \delta[v_{+}\otimes x] = [x].
\end{equation*}
This shows that $\delta: H^{0}(A) \to H^{0}(T^{-}(d-1))$ is surjective, sending $[v_{+}\otimes x]$ to $[x]$.

From the long exact sequence we then obtain
\[
\xymatrix{
0 \ar[r] & H^{1}(T^{+}(d)) \ar[r] & H^{1}(A) = 0
}
\]
which implies
\[
H^{1}(T^{+}(d)) = 0,
\]
and
\[
\xymatrix{
0 \ar[r] & H^{0}(T^{+}(d)) \ar[r] & H^{0}(A) \ar[r]^-{\delta} & H^{0}(T^{-}(d-1)) \ar[r] & 0
}
\]
which gives
\[
H^{0}(T^{+}(d)) \;\cong\; v_{-} \otimes H^{0}(T^{-}(d-1)) \;\cong\; H^{0}(T^{-}(d-1)).
\]
Consequently,
\[
H^{k}(T^{+}(d)) \;\cong\; H^{k}(T^{-}(d-1)).
\]

\paragraph{Quantum grading.}
Let $[x] \in H^{0}(T^{-}(d-1))$ be a generator of quantum grading $-1$. Then
\[
-1 = p(x) - n_{-} + n_{+} + \theta(x),
\]
where $p(x) = 0$ is the homological grading (or height), and $n_{-}$, $n_{+}$ are the numbers of left-handed and right-handed crossings in $T^{-}(d-1)$, respectively. This gives
\[
\theta(x) = n_{-} - n_{+} - 1.
\]
In $T^{+}(d)$, the number of left-handed crossings remains $n_{-}$, while the number of right-handed crossings becomes $n_{+} + 1$. Since
\[
\theta(v_{-} \otimes x) = \theta(x) - 1,
\]
the quantum grading of $[v_{-} \otimes x] \in H^{0}(T^{+}(d))$ is
\[
\Phi\big( [v_{-} \otimes x] \big)
= 0 - n_{-} + (n_{+} + 1) + (\theta(x) - 1) = -1.
\]

By induction, we conclude that
\[
H^{k}(T(d)) \cong
\begin{cases}
\mathbb{K}\{x\}, & k = 0, \\
0, & \text{otherwise},
\end{cases}
\]
where $x$ has quantum grading $-1$.
\end{proof}

\begin{theorem}\label{thm:tangle_invariance}
Khovanov homology of a tangle is invariant under the Reidemeister moves.
\end{theorem}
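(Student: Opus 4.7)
The plan is to establish invariance under each of the three Reidemeister moves R1, R2, R3 separately by exhibiting, in each case, an explicit chain homotopy equivalence between the Khovanov complexes of the two diagrams differing by the move. Since a Reidemeister move is local (confined to a small disk), and since the functor $\mathcal{G}$ is defined by local data at each crossing, the Khovanov complex of the full tangle factors as the tensor product of the local complex with a fixed complex on the complement of the disk. It therefore suffices to exhibit the chain homotopy equivalence at the level of local tangles and to extend by tensoring with the identity on the outside piece.

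For R1, invariance follows almost immediately from the preceding proposition. That proposition identifies the Khovanov homology of the $d$-twisted local tangle with a one-dimensional module concentrated in homological degree $0$ and quantum grading $-1$, independently of $d$. In particular, the local complexes before and after a single R1 curl are quasi-isomorphic, and the inductive argument in the proof of the proposition can be upgraded to an explicit chain homotopy equivalence by retaining the maps constructed at each inductive step. The normalization of homological and quantum degrees by $n_{+}$ and $n_{-}$ in the definition of $Kh^{k}(T)$ ensures that the shifts induced by adding or removing the twist are exactly cancelled, so the invariance is sharp on both gradings.

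For R2, one works with the $2$-cube of smoothings at the two R2 crossings. Two of the four vertices of this cube contain a small cancellable bigon, and the corresponding piece of the complex forms an acyclic subcomplex that can be eliminated by a Gaussian-elimination-style chain homotopy. The surviving quotient is canonically isomorphic to the Khovanov complex of the two parallel strands. The argument parallels Khovanov's original proof for links; the presence of $W$-factors on open arcs introduces no new difficulty because the maps defined by $\mathcal{G}$ on arc pieces act by the identity away from the bigon being cancelled.

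The main obstacle is R3, where the relevant cube of smoothings has eight vertices and no single pair of smoothings is cancelled by an obvious chain homotopy. The most efficient strategy, following Bar-Natan, is to apply R2 invariance to two different pairs of crossings inside the R3 configuration, thereby reducing both the "before" and "after" R3 complexes to a common simpler complex; one then verifies that the two reductions agree up to a canonical isomorphism. Adapting this strategy to tangles requires tracking the $W$-factors arising from open arcs and checking compatibility with $\mathcal{G}$ on those factors, but once the R2 reductions are set up the remaining computation is finite and mechanical, amounting to a handful of algebraic identities in tensor products of $V$ and $W$ that follow directly from the formulas defining $\mathcal{G}$.
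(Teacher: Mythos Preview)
The paper's own proof is a one-line citation: it invokes Bar-Natan's universal invariance theorem for tangles (in the cobordism category modulo the $S$, $T$, $4Tu$ relations) together with the result from \cite{liu2024persistent} that the functor $\mathcal{G}$ descends from that universal construction. Your proposal takes a genuinely different route, attempting a direct, self-contained verification of R1--R3 invariance for this particular functor.

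There is, however, a real gap in your reduction step. You assert that because a Reidemeister move is local, ``the Khovanov complex of the full tangle factors as the tensor product of the local complex with a fixed complex on the complement of the disk,'' and that it therefore suffices to build the homotopy equivalence locally and ``extend by tensoring with the identity.'' This is false as stated. The local tangle inside the Reidemeister disk has boundary strands that are glued to the outside piece; after applying $\mathcal{G}$, a smoothing of the full diagram may have circles that pass through both regions, and such a circle contributes a copy of $V$, not $W\otimes W$. Theorem~\ref{thm:khovanov_tensor_product} gives a tensor-product decomposition only for \emph{disjoint} tangles, which is exactly the situation you are \emph{not} in. The correct local-to-global mechanism is Bar-Natan's planar-algebra (canopolis) structure, under which a local chain homotopy equivalence does propagate---but this is a substantially stronger statement than tensoring with an identity, and proving it is essentially the content of \cite{bar2005khovanov} that the paper is citing.

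A secondary issue: the preceding proposition computes the \emph{homology} of an isolated $d$-twisted arc. Even granting your local-to-global step, a homology isomorphism is weaker than a chain homotopy equivalence, and the latter is what you need in order to tensor (or glue) and preserve quasi-isomorphism. You note this can be ``upgraded,'' but the upgrade is not supplied, and for R2 and R3 the corresponding explicit homotopies are only sketched. None of these local constructions is wrong in spirit---they mirror Khovanov's and Bar-Natan's original arguments---but without a correct gluing principle the proposal does not yet constitute a proof.
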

\begin{proof}
It is a direct result from \cite[Theorem~1]{bar2005khovanov} and \cite[Theorem~3.4]{liu2024persistent}.
\end{proof}

For Khovanov homology of tangles, the Reidemeister moves keep the quantum grading unchanged.

\subsubsection{Disjoint of tangles}

\begin{theorem}\label{thm:khovanov_tensor_product}
For two tangles $T$ and $T'$ with disjoint support, there exists a natural isomorphism of chain complexes
\[
 Kh(T \sqcup T') \cong Kh(T) \otimes Kh(T').
\]
Consequently, the Khovanov homology groups satisfy
\[
 H^{\ast}(T \sqcup T') \cong H^{\ast}(T) \otimes H^{\ast}(T'),
\]
where the tensor product is taken in the appropriate graded chain complex category.
\end{theorem}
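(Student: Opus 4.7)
The plan is to exhibit an explicit isomorphism at the level of state cubes and then promote it to the level of cochain complexes by verifying compatibility with the differentials $d_\xi$ and the functor $\mathcal{G}$. Once the chain-complex isomorphism is in hand, the statement on homology follows from the Künneth theorem.

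First, I would set up notation. Let $T$ have $n$ crossings and $T'$ have $n'$ crossings; order the crossings of $T \sqcup T'$ so that all crossings of $T$ come before those of $T'$. Then there is a canonical bijection $\{0,1\}^{n+n'} \cong \{0,1\}^n \times \{0,1\}^{n'}$ sending $(s,s')$ to the concatenated state. Because $T$ and $T'$ have disjoint support in the disk $B^2$, every smoothing is performed locally at one of the crossings of either $T$ or $T'$, so $(T\sqcup T')_{(s,s')} = T_s \sqcup T'_{s'}$ as disjoint unions of circles and arcs. Grouping by distance from the origin,
\begin{equation*}
   [[T\sqcup T']]^{k} \;=\; \bigoplus_{i+j=k}\;\bigoplus_{\substack{\ell(s)=i\\ \ell(s')=j}} T_s \sqcup T'_{s'}.
\end{equation*}

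Next, I would apply $\mathcal{G}$. Since $\mathcal{G}$ assigns $V$ to each circle and $W$ to each arc and takes tensor products over the components, it is manifestly multiplicative under disjoint union: $\mathcal{G}(T_s \sqcup T'_{s'}) = \mathcal{G}(T_s) \otimes_{\mathbb{K}} \mathcal{G}(T'_{s'})$. Combined with the bi-additivity of $\otimes$ over direct sums, this yields a natural identification
\begin{equation*}
   \mathcal{G}\bigl([[T\sqcup T']]^{k}\bigr) \;\cong\; \bigoplus_{i+j=k}\mathcal{G}([[T]]^i)\otimes \mathcal{G}([[T']]^j).
\end{equation*}
Accounting for the grading shift $Kh^{k}(\,\cdot\,)=\mathcal{G}([[\,\cdot\,]]^{k+n_-})$ and the obvious equality $n_-(T\sqcup T') = n_-(T)+n_-(T')$, we obtain the required graded identification $Kh^{k}(T\sqcup T') \cong \bigoplus_{a+b=k} Kh^{a}(T)\otimes Kh^{b}(T')$.

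I would then verify that the differential matches the standard tensor-product differential. Every edge $\xi$ in the $(n+n')$-cube flips a single coordinate, which lies either in the $T$-block or in the $T'$-block; correspondingly the local morphism $d_\xi$ of Section~\ref{section:computation} acts only in the $T$-factor (leaving $T'_{s'}$ untouched) or only in the $T'$-factor. After applying $\mathcal{G}$, such edges become $d_{\xi_T}\otimes \mathrm{id}$ or $\mathrm{id}\otimes d_{\xi_{T'}}$. The sign $(-1)^{\mathrm{sgn}(\xi)}$, being the number of $1$'s appearing before the unique $\star$, coincides with the homological degree of $\mathcal{G}([[T]]^{i})$ whenever the flipped coordinate lies in $T'$ (because $T$'s coordinates precede $T'$'s in our ordering), and contributes no extra sign when the flipped coordinate lies in $T$. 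This is exactly the Koszul sign rule, so the total differential on the right-hand side equals $d_T \otimes \mathrm{id} + (-1)^{|\cdot|}\,\mathrm{id}\otimes d_{T'}$, i.e.\ the differential of $Kh(T)\otimes Kh(T')$. This establishes the chain-complex isomorphism.

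Finally, passing to cohomology, the Künneth theorem yields $H^{\ast}(T \sqcup T') \cong H^{\ast}(T) \otimes H^{\ast}(T')$; over a field this is immediate, and over a general ring $\mathbb{K}$ it holds provided one of $Kh^{\ast}(T),Kh^{\ast}(T')$ is a complex of flat modules, which is automatic here since every $Kh^{k}$ is a free $\mathbb{K}$-module. The main technical care goes into the sign bookkeeping in the third step; once the ordering convention is fixed and the Koszul matching is verified, everything else is a direct unpacking of the definitions and the multiplicativity of $\mathcal{G}$ on disjoint unions.
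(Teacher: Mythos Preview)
Your proof is correct and follows essentially the same route as the paper: identify states of $T\sqcup T'$ with pairs of states, use multiplicativity of $\mathcal{G}$ on disjoint unions, match the grading shift via $n_-(T\sqcup T')=n_-(T)+n_-(T')$, and observe that the cube differential splits as a tensor-product differential. If anything, you are more careful than the paper, which dispatches the differential compatibility and the passage to homology in a single sentence without the explicit Koszul-sign check or the Künneth/freeness remark.
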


\begin{proof}
Let $s \in \{0,1\}^{n(T)}$ be a state of $T$ in the cube of resolutions, and let $s' \in \{0,1\}^{n(T')}$ be a state of $T'$. Here, $n(T)$ and $n(T')$ denote the number of crossings of $T$ and $T'$, respectively. Then the corresponding state of $T \sqcup T'$ is $(s,s') \in \{0,1\}^{n(T)+n(T')}$.
It follows that
\[
(T \sqcup T')_{(s,s')} = T_s \sqcup T_{s'}.
\]
By definition, we have
\[
[[T \sqcup T']]^k = \bigoplus_{k=\ell(s,s')} (T \sqcup T')_{(s,s')} = \bigoplus_{k=\ell(s)+\ell(s')} T_s \sqcup T_{s'}.
\]
Noting that $\mathcal{G}(T_s \sqcup T_{s'}) = \mathcal{G}(T_s) \otimes \mathcal{G}(T_{s'})$, we obtain
\begin{align*}
  Kh^{k}(T+T') = & \mathcal{G}\left([[T+T']]^{k+n_{-}(T+T')}\right) \\
   = & \mathcal{G}\left(\bigoplus\limits_{k+n_{-}(T+T')=\ell(s)+\ell(s')} T_{s} \sqcup T_{s'}\right)\\
   = & \bigoplus\limits_{k+n_{-}(T)+n_{-}(T')=\ell(s)+\ell(s')} \mathcal{G}(T_{s}) \otimes \mathcal{G}(T_{s'}),
\end{align*}
where $n_-(T)$, $n_-(T')$, and $n_-(T \sqcup T')$ denote the numbers of left-handed crossings of $T$, $T'$, and $T \sqcup T'$, respectively.
Hence, we can write
\[
Kh^k(T \sqcup T') = \bigoplus_{k_1+k_2=k} \left(\bigoplus_{k_1+n_-(T)=\ell(s)} \mathcal{G}(T_s)\right) \otimes \left(\bigoplus_{k_2+n_-(T')=\ell(s')} \mathcal{G}(T_{s'})\right).
\]
This implies
\[
Kh^k(T \sqcup T') = \bigoplus_{k_1+k_2=k} Kh^{k_1}(T) \otimes Kh^{k_2}(T').
\]
Since the differential respects this decomposition, decomposing as the sum of the differentials on each factor tensored with the identity on the other, it follows that the chain complex for the disjoint union is isomorphic to the tensor product of the individual chain complexes, which in turn induces the claimed isomorphism on homology.
\end{proof}

\begin{corollary}
Let $T$ be a tangle with no crossings; that is, $T$ is a disjoint union of $r$ circles and $t$ arcs. Then the Khovanov homology of $T$ is given by
\[
H^{k}(T) \cong \left\{
             \begin{array}{ll}
               V^{\otimes r} \otimes W^{\otimes t}, & \hbox{$k=0$;} \\
               0, & \hbox{otherwise.}
             \end{array}
           \right.
\]
where $V=\mathbb{K}\{v_{+},v_{-}\}$ and $W=\mathbb{K}\{w\}$.
\end{corollary}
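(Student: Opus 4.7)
The plan is to apply the definitions directly, since for a crossingless tangle the cube of resolutions collapses and the Khovanov complex becomes trivial in a structural sense.

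First I would observe that with $n=0$ crossings one has $n_{+}=n_{-}=0$, so the cube $\{0,1\}^{n}$ consists of the single vertex $s=()$ corresponding to the tangle $T_{s}=T$ itself, and it has no edges. Consequently $[[T]]^{0}=T$ while $[[T]]^{k}=0$ for $k\neq 0$, and every morphism $d^{k}$ in the pre-Khovanov sequence is the zero map for lack of edges.

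Next, I would apply the functor $\mathcal{G}$ together with the height shift $Kh^{k}(T)=\mathcal{G}([[T]]^{k+n_{-}})$. Because $n_{-}=0$, this gives $Kh^{0}(T)=\mathcal{G}(T)=W^{\otimes t}\otimes V^{\otimes r}$, which coincides with $V^{\otimes r}\otimes W^{\otimes t}$ up to the canonical reordering of tensor factors, and $Kh^{k}(T)=0$ in all other degrees. Since the differentials vanish identically, the cohomology of $(Kh^{\ast}(T),d_{T}^{\ast})$ equals the cochain groups, yielding the stated formula for $H^{k}(T)$.

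An alternative approach would be to invoke Theorem \ref{thm:khovanov_tensor_product}: write $T$ as the disjoint union of $r$ copies of a single circle and $t$ copies of a single arc, and reduce by iteration to computing $H^{\ast}(\text{circle})\cong V$ and $H^{\ast}(\text{arc})\cong W$, each concentrated in degree $0$. Both reductions themselves follow from the same empty-cube observation, so this route is really a repackaging of the direct argument. There is no genuine obstacle here; the only points requiring mild care are the conventions that the empty tuple counts as a single state, that $\mathcal{G}$ evaluated on an empty disjoint union yields the coefficient ring $\mathbb{K}$, and the reordering of $V$'s and $W$'s in the tensor product, none of which affect the isomorphism class of the answer.
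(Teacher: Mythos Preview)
Your argument is correct. The paper states this result as an immediate corollary of Theorem~\ref{thm:khovanov_tensor_product} without further proof, so its implicit route is your ``alternative approach'': factor $T$ as a disjoint union and tensor together the homologies of a single circle and a single arc. Your primary argument instead bypasses the theorem and reads the answer off directly from the definition of the Khovanov complex for an empty cube of resolutions; this is marginally more self-contained since it does not rely on the K\"unneth-type statement, whereas the paper's framing emphasizes that the corollary fits into the general disjoint-union formalism. As you note yourself, the two routes collapse to the same elementary observation once one computes the base cases, so there is no substantive difference.
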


\subsection{Computation and examples}

\subsubsection{Computation procedure}

The procedure for computing the Khovanov homology of tangles is outlined as follows.

\begin{enumerate}[label=(\roman*)]
  \item \textbf{Orientation and crossing labeling}: \\
  For a given tangle, choose an arbitrary orientation. Label each crossing with an index, and mark it with ``$+$'' or ``$-$'' depending on whether it is right-handed or left-handed. Obtain the Gauss code, and record the numbers $n_{+}$ and $n_{-}$, with $n = n_{+} + n_{-}$.

  \item \textbf{Diagram representation}: \\
  For each state $s$, draw the corresponding smoothed diagram $T_s$. Classify these diagrams according to the length $\ell(s)$, resulting in $n$ classes; the $k$-th class contains $\binom{n}{k}$ tangles. Identify the edges $d_{\xi}$ in the $n$-cube connecting states from class $k$ to class $k+1$, and link the corresponding diagrams $T_s$ and $T_{s'}$. Repeat this for all such connections.

  \item \textbf{Applying construction $G$}: \\
  Associate each diagram with a vector space by construction $\mathcal{G}$, and assign to each edge $d_\xi$ a linear map $\mathcal{G}(d_{\xi})$.

  \item \textbf{Constructing the cochain complex}: \\
  For all states with $\ell(s) = k$, take the direct sum
  \[
    \bigoplus_{\ell(s) = k} \mathcal{G}(T_s),
  \]
  and assign to this term the height $k - n_{-}$. The differential on this space is given by
  \[
    d^{k} = \sum_{|\xi| = k} (-1)^{\mathrm{sgn}(\xi)} \, \mathcal{G}(d_{\xi}).
  \]

  \item \textbf{Computing homology}: \\
  Compute the homology of the cochain complex by
  \[
    H^{k}(T) = \frac{\ker d^{k}}{\mathrm{im}\, d^{k-1}}.
  \]
  For each generator, determine its quantum grading and normalize the gradings accordingly.
\end{enumerate}

\subsubsection{Illustrative examples}

\begin{example}
Consider the crossing tangle shown in Figure~\ref{figure:crossing_tangle}. There are two possible orientations of the crossing tangle: right-handed and left-handed. We compute the corresponding Khovanov homology in each case.
\begin{figure}[h]
  \centering
  \includegraphics[width=0.25\textwidth]{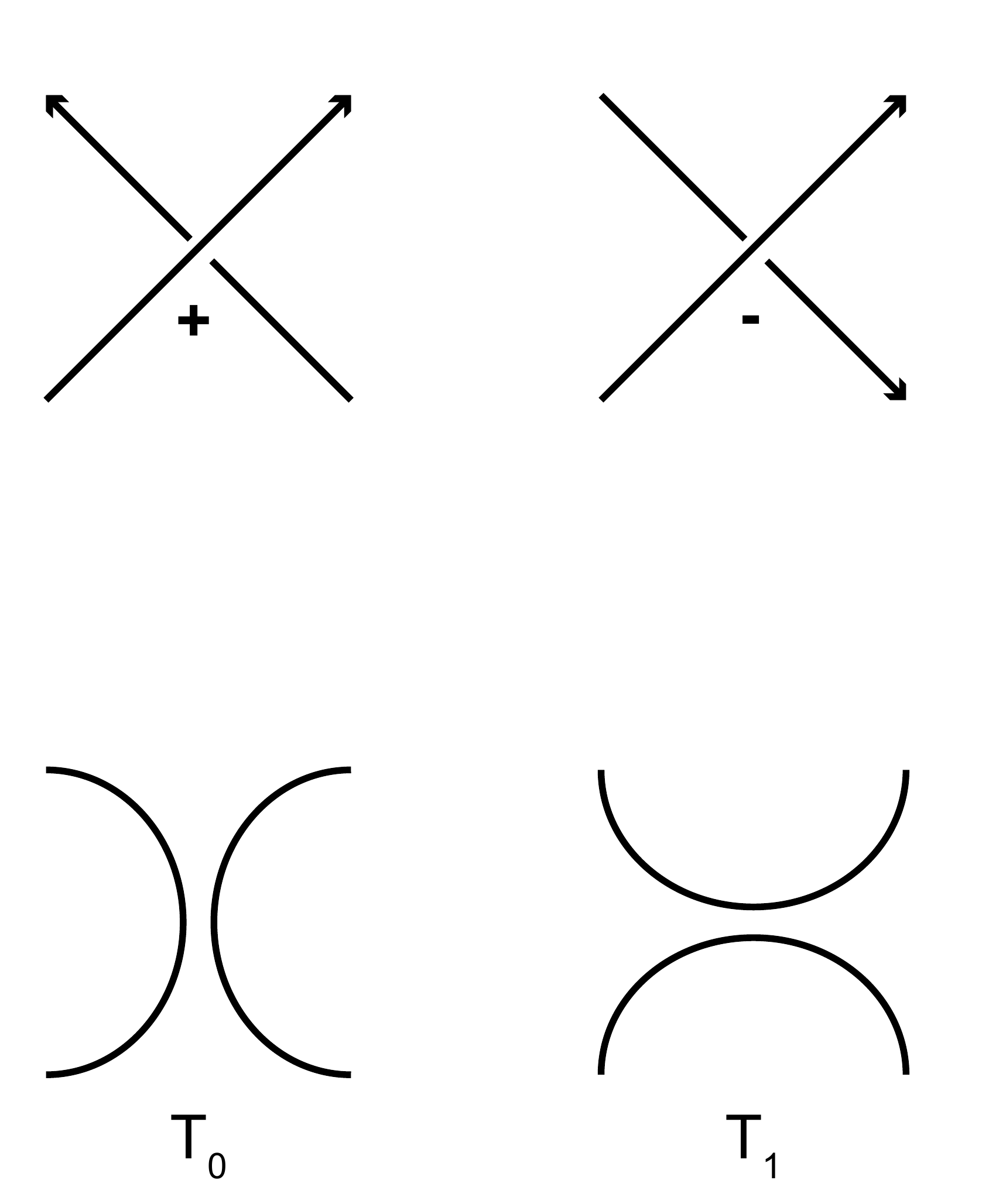}
  \caption{Crossing tangle with different orientations, and the cube of smoothing resolutions for the crossing tangle.}\label{figure:crossing_tangle}
\end{figure}

We first consider the right-handed crossing tangle $T$.

(i) For the right-handed crossing tangle $T$, we have $n_{+} = 1$ and $n_{-} = 0$.

(ii) The cube of resolutions has only two vertices, corresponding to the states $(0)$ and $(1)$. The associated tangle diagrams are shown in Figure~\ref{figure:crossing_tangle}.
The map $d_{(\star)} \colon T_{(0)} \to T_{(1)}$ corresponds to the morphism given by $\raisebox{-0.15cm}{\saddle{0.3}}$.

(iii) Applying the construction $\mathcal{G}$, we obtain
\begin{eqnarray*}
  \mathcal{G}(T_{0}) &=& W\otimes W, \\
  \mathcal{G}(T_{1}) &=& W\otimes W,
\end{eqnarray*}
and
\[
  \mathcal{G}(d_{(\star)}) = \mathcal{G}(T_{(0)} \to T_{(1)}),
  \quad w\otimes w \mapsto 0.
\]

(iv) We then obtain the cochain complex
\[
  \xymatrix{
    0 \ar[r] & W\otimes W \ar[r]^-{d^{0}} & W\otimes W \ar[r] & 0,
  }
\]
where the height of $\mathcal{G}(T_{0}) = W\otimes W$ is $\ell((0))-n_{-}=0$, and the height of $\mathcal{G}(T_{1})=W\otimes W$ is $1$.
The differential is given by
\[
  d^{0}(w\otimes w) = 0.
\]

(v) A direct calculation shows that
\[
\begin{split}
    & \ker d^{0} = W\otimes W, \quad \mathrm{im}\, d^{0} = 0, \\
    & \ker d^{1} = W\otimes W, \quad \mathrm{im}\, d^{1} = 0.
\end{split}
\]
It follows that
\[
  H^{k}(T,(+)) =
  \begin{cases}
    W\otimes W, & k = 0,1, \\
    0, & \text{otherwise}.
  \end{cases}
\]

Similarly, the Khovanov homology of the left-handed crossing tangle can be computed.
The only difference is that $n_{-} = 1$ and $n_{+} = 0$, which yields
\[
  H^{k}(T,(-)) =
  \begin{cases}
    W\otimes W, & k = -1,0, \\
    0, & \text{otherwise}.
  \end{cases}
\]
It is noteworthy that choosing different orientations generally leads to different computed Khovanov homology for the tangle.

\end{example}

\begin{example}\label{example:twist}
Consider the following tangle shown in Figure \ref{figure:arc_tangle}. This tangle is obtained from a single arc with a self-crossing. We compute its Khovanov homology as follows.
\begin{figure}[h]
  \centering
  \includegraphics[width=0.2\textwidth]{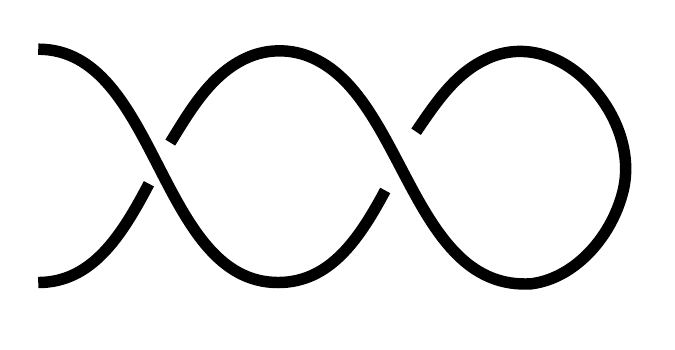}
  \includegraphics[width=0.2\textwidth]{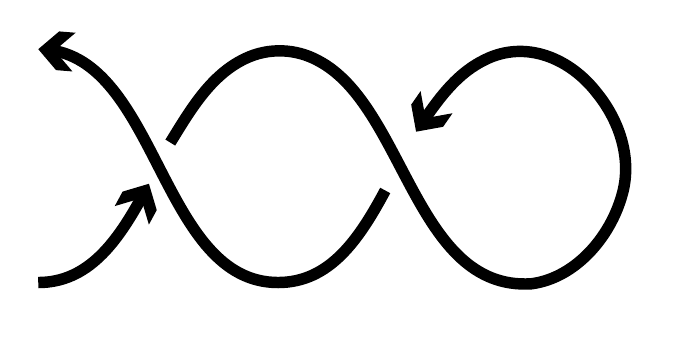}
  \caption{The $2$-twisted tangle and its orientation.}\label{figure:arc_tangle}
\end{figure}

(i) According to the orientation in Figure \ref{figure:arc_tangle}, the tangle contains two left-handed crossings. Therefore, $n=n_{-}=2$ and $n_{+}=0$.

(ii) We consider all states $(0,0)$, $(0,1)$, $(1,0)$, $(1,1)$, and their corresponding tangle diagrams are shown in Figure \ref{figure:arc_cube}.
\begin{figure}[h]
  \centering
  \includegraphics[width=0.75\textwidth]{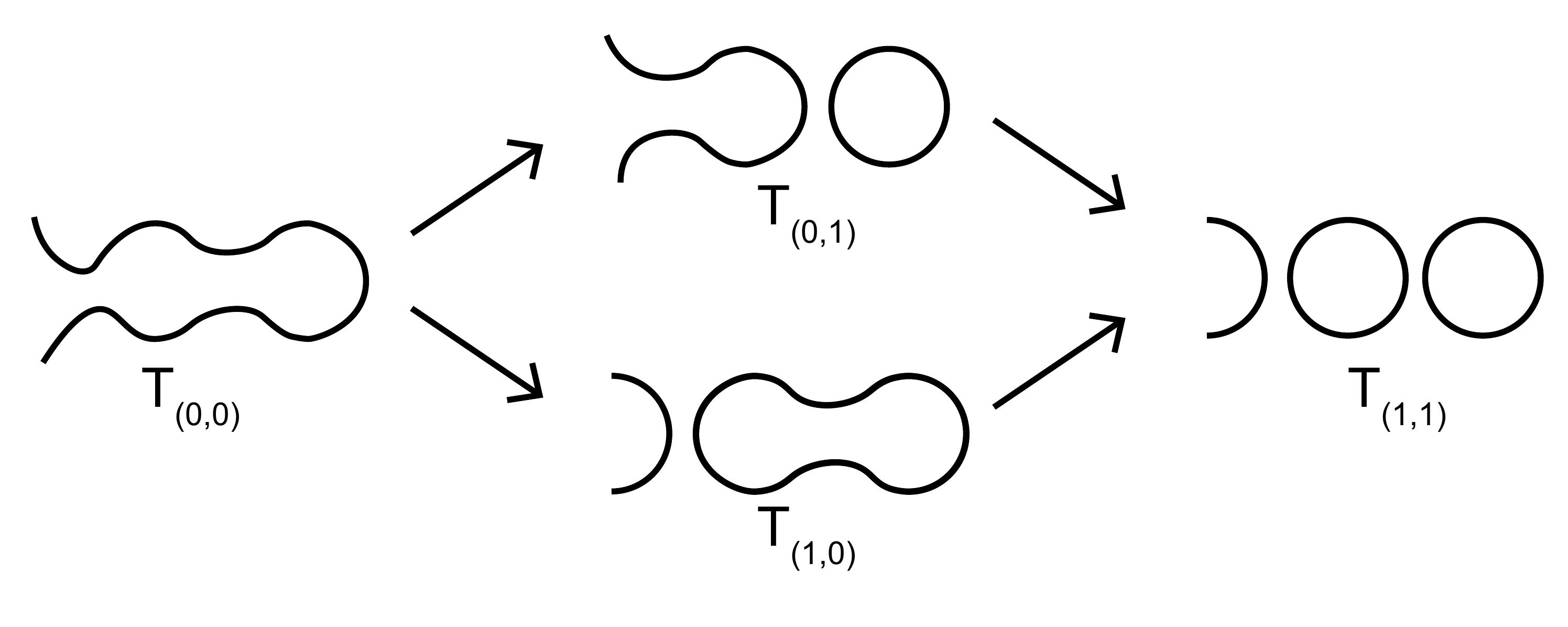}
  \caption{Cube of smoothing resolutions for the twisted tangle in Example \ref{example:twist}.}\label{figure:arc_cube}
\end{figure}
It can be observed that $d_{(0,\star)}:T_{(0,0)}\to T_{(0,1)}$ corresponds to the morphism given by $\raisebox{-0.15cm}{\saddle{0.3}}$ at the right crossing, while $d_{(\star,0)}: T_{(0,0)}\to T_{(1,0)}$ corresponds to $\raisebox{-0.15cm}{\saddle{0.3}}$ at the left crossing. Similarly, $d_{(\star,1)}: T_{(0,1)}\to T_{(1,1)}$ and $d_{(1,\star)}:T_{(1,0)}\to T_{(1,1)}$ are given by $\raisebox{-0.15cm}{\saddle{0.3}}$ at the left crossing.

(iii) Using the functor $\mathcal{G}$, we obtain
\begin{eqnarray*}
  \mathcal{G}(T_{(0,0)}) &=& W, \\
  \mathcal{G}(T_{(0,1)}) &=& W\otimes V,\\
  \mathcal{G}(T_{(1,0)}) &=& W\otimes V ,\\
  \mathcal{G}(T_{(1,1)}) &=& W\otimes V \otimes V .
\end{eqnarray*}
Accordingly, the linear maps are
\begin{align*}
  \mathcal{G}(d_{(0,\star)}) = \mathcal{G}(T_{(0,0)}\to T_{(0,1)}), \quad & w\mapsto w\otimes v_{-}, \\
  \mathcal{G}(d_{(\star,0)}) = \mathcal{G}(T_{(0,0)}\to T_{(1,0)}), \quad & w\mapsto w\otimes v_{-}, \\
  \mathcal{G}(d_{(\star,1)}) = \mathcal{G}(T_{(0,1)}\to T_{(1,1)}), \quad &
    \left\{
      \begin{array}{ll}
        w\otimes v_{-}\mapsto (w\otimes v_{-})\otimes v_{-},  \\
        w\otimes v_{+}\mapsto (w\otimes v_{+})\otimes v_{-},
      \end{array}
    \right. \\
  \mathcal{G}(d_{(1,\star)}) = \mathcal{G}(T_{(1,0)}\to T_{(1,1)}), \quad &
    \left\{
      \begin{array}{ll}
        w\otimes v_{-}\mapsto -\, w\otimes (v_{-}\otimes v_{-}),  \\
        w\otimes v_{+}\mapsto -\, w\otimes (v_{+}\otimes v_{-}+ v_{-}\otimes v_{+}).
      \end{array}
    \right.
\end{align*}

(iv) The resulting cochain complex is
\begin{equation*}
  \xymatrix{
    0\ar@{->}[r] & W \ar@{->}[r]^-{d^{-2}} & (W\otimes V)\oplus (W\otimes V) \ar@{->}[r]^-{d^{-1}} & W\otimes V\otimes V \ar@{->}[r] & 0.
  }
\end{equation*}
The heights of $W$, $(W\otimes V)\oplus (W\otimes V)$, and $W\otimes V\otimes V$ are $-2$, $-1$, and $0$, respectively. The differentials are explicitly
\begin{equation*}
  d^{-2}(w) =  \left(
                \begin{array}{cccc}
                  0 & 1 & 0 & 1
                \end{array}
              \right)
   \left(
                 \begin{array}{c}
                   (w\otimes v_{+},0) \\
                   (w\otimes v_{-},0) \\
                   (0,w\otimes v_{+}) \\
                   (0,w\otimes v_{-})
                 \end{array}
               \right)
\end{equation*}
and
\begin{equation*}
  d^{-1}\left(
                 \begin{array}{c}
                   (w\otimes v_{+},0) \\
                   (w\otimes v_{-},0) \\
                   (0,w\otimes v_{+}) \\
                   (0,w\otimes v_{-})
                 \end{array}
               \right) =\left(
                          \begin{array}{cccc}
                            0 & 1 & 0 & 0 \\
                            0 & 0 & 0 & 1 \\
                            0 & -1 & -1 & 0 \\
                            0 & 0 & 0 & -1
                          \end{array}
                        \right)
                \left(
                           \begin{array}{c}
                             w\otimes v_{+} \otimes v_{+} \\
                             w\otimes v_{+} \otimes v_{-} \\
                             w\otimes v_{-} \otimes v_{+} \\
                             w\otimes v_{-} \otimes v_{-}
                           \end{array}
                         \right).
\end{equation*}

(v) By a straightforward calculation, we have
\begin{eqnarray*}
  \ker d^{-2} &=& 0, \\
  \im d^{-2} &=& \langle (w\otimes v_{-},w\otimes v_{-})\rangle, \\
  \ker d^{-1} &=& \langle (w\otimes v_{-},w\otimes v_{-})\rangle,\\
  \im d^{-1} &=& \langle w\otimes v_{+} \otimes v_{-}, w \otimes v_{-} \otimes v_{+}, w\otimes v_{-} \otimes v_{-}\rangle.
\end{eqnarray*}
Thus, the Khovanov homology is
\begin{equation*}
  H^{k}(T) = \left\{
            \begin{array}{ll}
              [w\otimes v_{+} \otimes v_{+}], & k =0,\\
              0, & \text{otherwise}.
            \end{array}
          \right.
\end{equation*}
The quantum grading of the generator $[w\otimes v_{+} \otimes v_{+}]$ is
\begin{equation*}
  \Phi([w\otimes v_{+} \otimes v_{+}]) = 0 + n_{+} - n_{-} + \theta(w\otimes v_{+} \otimes v_{+}) = 0 + 0 -2+1=-1.
\end{equation*}
This result coincides with the Khovanov homology of a single crossing-free arc, as the given tangle can be transformed into a crossing-free arc by two $R1$-moves.
\end{example}

\begin{example}\label{example:tritangle}
Consider the tangle depicted in Figure \ref{figure:tritangle}. We proceed to compute its Khovanov homology.

\begin{figure}[h]
  \centering
  \includegraphics[width=0.15\textwidth]{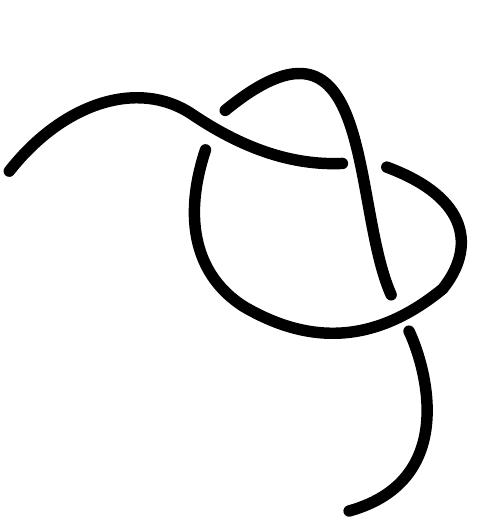}\qquad \qquad
  \includegraphics[width=0.15\textwidth]{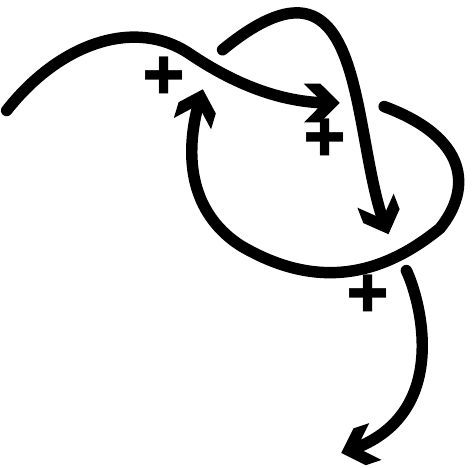}
  \caption{The tangle in Example \ref{example:tritangle} and its orientation.}\label{figure:tritangle}
\end{figure}

(i) First, we fix the orientation as shown in Figure \ref{figure:tritangle}. It can be observed that $n_{+}=3$ and $n_{-}=0$.

(ii) We now obtain eight states, given by
\begin{align*}
    & (0,0,0), (1,0,0), (0,1,0), (0,0,1), \\
    & (1,1,0), (1,0,1), (0,1,1), (1,1,1).
\end{align*}
The corresponding tangle diagrams are shown in Figure \ref{figure:tritangle_cube}.
\begin{figure}[h]
  \centering
  \includegraphics[width=0.6\textwidth]{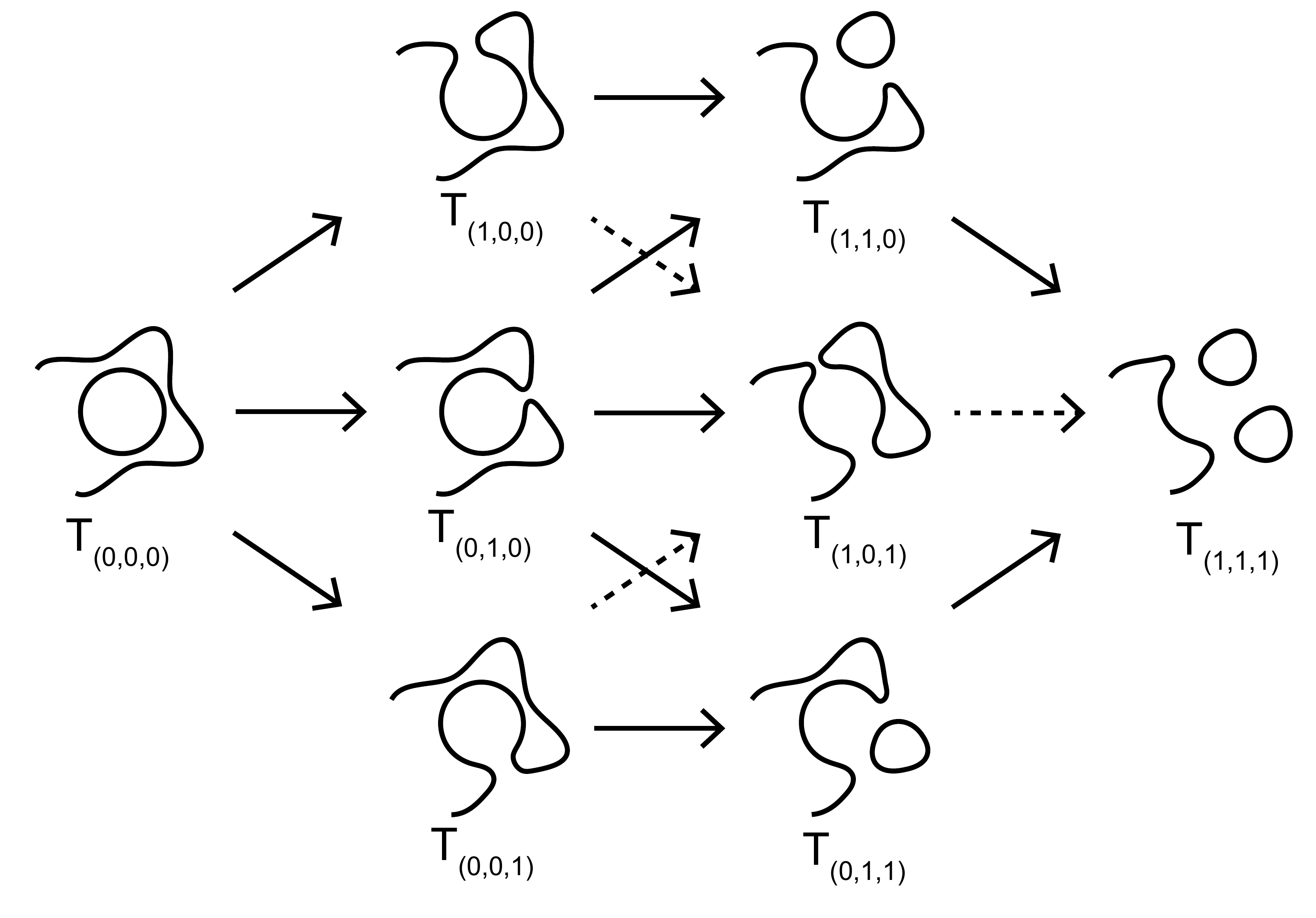}
  \caption{Cube of smoothing resolutions for the tangle in Example \ref{example:tritangle}.}\label{figure:tritangle_cube}
\end{figure}
We can then describe the connecting map as follows.
\begin{align*}
  & d_{(\star,0,0)}=\raisebox{-0.15cm}{\saddle{0.3}}:T_{(0,0,0)}\to T_{(1,0,0)},\quad d_{(0,\star,0)}=\raisebox{-0.15cm}{\hsaddle{0.3}}:T_{(0,0,0)}\to T_{(0,1,0)}, \\
  & d_{(0,0, \star)}=\raisebox{-0.15cm}{\saddle{0.3}}:T_{(0,0,0)}\to T_{(0,1,1)},\quad d_{(1,\star,0)}=\raisebox{-0.15cm}{\hsaddle{0.3}}:T_{(1,0,0)}\to T_{(1,1,0)}, \\
  & d_{(1,0,\star)}=\raisebox{-0.15cm}{\saddle{0.3}}:T_{(1,0,0)}\to T_{(1,0,1)},\quad d_{(\star,1,0)}=\raisebox{-0.15cm}{\saddle{0.3}}:T_{(0,1,0)}\to T_{(1,1,0)},\\
  & d_{(0,1,\star)}=\raisebox{-0.15cm}{\hsaddle{0.3}}:T_{(0,1,0)}\to T_{(0,1,1)},\quad d_{(\star,0,1)}=\raisebox{-0.15cm}{\saddle{0.3}}:T_{(0,0,1)}\to T_{(1,0,1)},\\
  & d_{(0, \star,1)}=\raisebox{-0.15cm}{\hsaddle{0.3}}:T_{(0,0,1)}\to T_{(0,1,1)},\quad d_{(1,1,\star)}=\raisebox{-0.15cm}{\saddle{0.3}}:T_{(1,1,0)}\to T_{(1,1,1)},\\
  & d_{(1,\star,1)}=\raisebox{-0.15cm}{\hsaddle{0.3}}:T_{(1,0,1)}\to T_{(1,1,1)},\quad d_{(\star,1,1)}=\raisebox{-0.15cm}{\saddle{0.3}}:T_{(0,1,1)}\to T_{(1,1,1)}.\\
\end{align*}

(iii) According to the construction of $\mathcal{G}$ on tangles, we have
\begin{align*}
    & \mathcal{G}(T_{(0,0,0)}) = W\otimes V, \\
    & \mathcal{G}(T_{(1,0,0)}) = \mathcal{G}(T_{(0,1,0)})= \mathcal{G}(T_{(0,0,1)}) = W,\\
    & \mathcal{G}(T_{(1,1,0)}) = \mathcal{G}(T_{(1,0,1)}) = \mathcal{G}(T_{(0,1,1)}) = W\otimes V,\\
    & \mathcal{G}(T_{(1,1,1)}) = W\otimes V\otimes V.
\end{align*}
Moreover, we have $\mathcal{G}( d_{(\star,0,0)}) = \mathcal{G}( d_{(0,\star,0)}) =  \mathcal{G}(d_{(0,0,\star)})$ on their respective components, which are given by
\begin{equation*}
   W\otimes  V\to W,\quad \left\{
                            \begin{array}{ll}
                                 w\otimes v_{+}\mapsto w,\\
                                 w\otimes v_{-}\mapsto 0,
                            \end{array}
                          \right.
\end{equation*}
The maps
\begin{equation*}
  \mathcal{G}( d_{(1,\star,0)}) = \mathcal{G}( d_{(1, 0,\star)}) =  \mathcal{G}(d_{(\star,1,0)})=\mathcal{G}(d_{(0,1,\star)})= \mathcal{G}(d_{(\star,0,1)})=\mathcal{G}(d_{(0,\star,1)})
\end{equation*}
on their respective components are given by
\begin{equation*}
  W\to W\otimes  V,\quad w\mapsto w\otimes v_{-}.
\end{equation*}
The maps $\mathcal{G}( d_{(1, 1,\star)}) =  \mathcal{G}(d_{(\star,1,1})$ on their respective components are represented by
\begin{equation*}
  W\otimes  V\to W\otimes  V\otimes  V,\quad \left\{
                                                        \begin{array}{ll}
                                                          w\otimes v_{+} \mapsto w\otimes v_{-} \otimes v_{+}, \\
                                                          w\otimes v_{-} \mapsto w\otimes v_{-} \otimes v_{-}
                                                        \end{array}
                                                      \right.
\end{equation*}
and the map $\mathcal{G}( d_{(1,\star, 1)})$ is described by
\begin{equation*}
  W\otimes  V\to W\otimes  V\otimes  V,\quad \left\{
                                                        \begin{array}{ll}
                                                          w\otimes v_{+} \mapsto w\otimes v_{+} \otimes v_{-} + w\otimes v_{-} \otimes v_{+}, \\
                                                          w\otimes v_{-} \mapsto w\otimes v_{-} \otimes v_{-}.
                                                        \end{array}
                                                      \right.
\end{equation*}

(iv) Altogether, we have the following cochain complex
\begin{small}
\begin{equation*}
  \xymatrix@C=0.6cm{
    0 \ar[r] & W\otimes V \ar[r]^-{d^{0}}& W\oplus W\oplus W \ar[r]^-{d^{1}} & (W\otimes V)\oplus (W\otimes V)\oplus(W\otimes V)\ar[r]^-{d^{2}}  & W\otimes V\otimes V\ar[r]&  0.
  }
\end{equation*}
\end{small}
The heights of the corresponding spaces, from left to right, are $0$, $1$, and $2$, respectively. Moreover, the differentials are given by
\begin{equation*}
  d^{0}\left(
         \begin{array}{c}
          w \otimes v_{+} \\
          w \otimes v_{-} \\
         \end{array}
       \right) = \left(
                   \begin{array}{ccc}
                     1 & 1 & 1 \\
                     0 & 0 & 0 \\
                   \end{array}
                 \right)
        \left(
                    \begin{array}{c}
                      (w,0,0) \\
                      (0,w,0) \\
                      (0,0,w) \\
                    \end{array}
                  \right),
\end{equation*}
\begin{equation*}
  d^{1}\left(
                    \begin{array}{c}
                      (w,0,0) \\
                      (0,w,0) \\
                      (0,0,w) \\
                    \end{array}
                  \right) = \left(
                              \begin{array}{cccccc}
                                0 & -1 & 0 & -1 & 0 & 0 \\
                                0 & 1 & 0 & 0 & 0 & -1 \\
                                0 & 0 & 0 & 1 & 0 & 1 \\
                              \end{array}
                            \right)
                   \left(
                              \begin{array}{c}
                                (w \otimes v_{+},0,0) \\
                                (w \otimes v_{-},0,0) \\
                                (0,w \otimes v_{+},0) \\
                                (0,w \otimes v_{-},0) \\
                                (0,0, w \otimes v_{+}) \\
                                (0,0, w \otimes v_{-}) \\
                              \end{array}
                            \right),
\end{equation*}
and
\begin{equation*}
  d^{2}\left(
                              \begin{array}{c}
                                (w \otimes v_{+},0,0) \\
                                (w \otimes v_{-},0,0) \\
                                (0,w \otimes v_{+},0) \\
                                (0,w \otimes v_{-},0) \\
                                (0,0, w \otimes v_{+}) \\
                                (0,0, w \otimes v_{-}) \\
                              \end{array},
                            \right) = \left(
                                        \begin{array}{cccc}
                                          0 & 1 & 0 & 0 \\
                                          0 & 0 & 0 & 1 \\
                                          0 & -1 & -1 & 0 \\
                                          0 & 0 & 0 & -1 \\
                                          0 & 0 & 1 & 0 \\
                                          0 & 0 & 0 & 1 \\
                                        \end{array}
                                      \right)
                             \left(
                                         \begin{array}{c}
                                           w\otimes v{+}\otimes v{+} \\
                                           w\otimes v{+}\otimes v{-} \\
                                           w\otimes v{-}\otimes v{+} \\
                                           w\otimes v{-}\otimes v{-} \\
                                         \end{array}
                                       \right).
\end{equation*}
As a result, we obtain
\begin{align*}
    & \ker d^{0} = \langle w\otimes v_{-}\rangle ,\quad \im d^{0} =\langle (w,w,w)\rangle,\quad \ker d^{1} = \langle (w,w,w)\rangle ,\quad\\
    &  \im d^{1} =\langle (w \otimes v_{-},w \otimes v_{-},0),(w \otimes v_{-},0, -w \otimes v_{-})\rangle, \\
    & \ker d^{2} =\langle (w \otimes v_{-},w \otimes v_{-},0),(w \otimes v_{-},0, -w \otimes v_{-}),(w \otimes v_{+},w \otimes v_{+}, w \otimes v_{+}) \rangle,\\
    & \im d^{2}  = \langle w\otimes v{+}\otimes v{-},w\otimes v{-}\otimes v{+} ,w\otimes v{-}\otimes v{-}\rangle.
\end{align*}
Hence, the Khovanov homology of $T$ is
\begin{equation*}
  H^{k}(T) = \left\{
               \begin{array}{ll}
                 \mathbb{K}\{[w\otimes v_{-}]\}, & \hbox{$k=0$;} \\
                 \mathbb{K}\{[(w \otimes v_{+},w \otimes v_{+}, w \otimes v_{+}) ]\}, & \hbox{$k=2$;} \\
                 \mathbb{K}\{[(w \otimes v_{+}\otimes v_{+})]\}, & \hbox{$k=3$;} \\
                 0, & \hbox{otherwise.}
               \end{array}
             \right.
\end{equation*}

\end{example}

\section{Algorithm and examples}

In this section, we revisit the computation of Khovanov homology for tangles from the perspective of code implementation. We provide a step-by-step overview of how smoothing states, differentials, and gradings can be represented and manipulated algorithmically. In addition, we present several usage examples to illustrate practical computations and applications of the Khovanov complex.

\subsection{Algorithmic treatment of smoothing states}

Let $D$ be a planar diagram code with $n$ crossings. A smoothing state is a binary sequence $s = (s_1, s_2, \dots, s_n) \in \{0,1\}^n$, where each $s_i$ determines whether a 0-smoothing or 1-smoothing is applied at crossing $i$.

Each smoothing state produces a set of connected components in the smoothed diagram, as described in the previous section. We denote the number of \texttt{circles} and \texttt{arcs} in the smoothed diagram associated with state $s$ as $c(s)$ and $a(s)$.

For each homological degree $k$, the Khovanov chain group $C^k = \mathcal{G}([[T]]^{k})$ is a direct sum over all states $s$ with $|s| = k$:
\[
C^k = \bigoplus_{\substack{s \in \{0,1\}^n \\ |s| = k}} V^{\otimes c(s)}\otimes W^{\otimes a(s)},
\]
where $V$ is a graded vector space generated by basis elements $v_+$ and $v_-$ with quantum gradings $+1$ and $-1$, respectively. And $W$ is a graded vector space generated by basis element $w$ with quantum grading $-1$.

Each tensor factor in $V^{\otimes c(s)}\otimes W^{\otimes a(s)}$ corresponds to a \texttt{circle} or \texttt{arc} in the smoothed diagram associated with a given smoothing state $s$. Therefore, from a computational standpoint, it is essential to develop algorithms that can efficiently identify and classify these components, distinguishing between circles and arcs, within the smoothed diagram. These algorithms form the foundation for constructing the chain groups in the Khovanov complex.

Given a planar diagram code consisting of a list of crossings
\[
P = [p_1, p_2, \dots, p_n],
\]
where each crossing $p_i$ is a 4-tuple of strand labels
\[
p_i = [a_i, b_i, c_i, d_i],
\]
we consider the smoothing state $s = (s_1, s_2, \dots, s_n)$, with $s_i \in \{0,1\}$, which specifies how to resolve each crossing.

For each crossing $p_i$:
\begin{itemize}
  \item If $s_i = 0$, we connect $a_i \sim d_i$ and $b_i \sim c_i$ (called 0-smoothing or A-smoothing);
  \item If $s_i = 1$, we connect $a_i \sim b_i$ and $c_i \sim d_i$ (called 1-smoothing or B-smoothing).
\end{itemize}

These local connections collectively define a binary relation $\sim$ on the set of strand labels. Specifically, for each smoothing at crossing $p_i$, we add two relations depending on the smoothing type. By repeatedly identifying all labels connected through sequences of such pairwise relations, we obtain a partition of the strand label set into disjoint classes, each corresponding to a connected component in the smoothed diagram.

\[
C_1,\quad C_2,\quad \dots,\quad C_k,
\]
where each $C_j$ is a set of mutually connected strand labels
\[
C_j = \{ \alpha_1,\alpha_2, \dots, \alpha_m \}.
\]

These classes represent the connected components in the smoothed diagram. To determine whether a component is a \texttt{circle} or an \texttt{arc}, we examine the labels of strands in each class. We classify the component as
\begin{itemize}
  \item \texttt{arc}, if any strand in the class contains a ``\texttt{|}'';
  \item \texttt{circle}, if all strands in the class are purely internal.
\end{itemize}

The final output of the smoothing algorithm is a list of connected components, each represented as an object with:
\begin{itemize}
  \item a \texttt{Type}, which is either \texttt{arc} or \texttt{circle};
  \item a \texttt{Representative}, which is the list of strand labels in the component.
\end{itemize}

\subsection{Usage example: smoothing state computation} \label{section:example}

The following example demonstrates how to use the \texttt{SmoothingStateGenerator} to compute the smoothed components from a given planar diagram code.

\paragraph{Step 1: Define the PD code.}

\begin{verbatim}
pdcode = [
    ["3", "10|", "4", "9"],
    ["|1", "8", "2", "|7"],
    ["9", "4", "8", "5"],
    ["2", "5", "3", "6|"]
]
\end{verbatim}

\paragraph{Step 2: Initialize the generator and query smoothing states.}

\begin{verbatim}
smoothing_state_generator = SmoothingStateGenerator(pdcode)
print(smoothing_state_generator.get_smoothing_state("0000"))
print(smoothing_state_generator.get_smoothing_state("0001"))
\end{verbatim}

\paragraph{Output.}

\begin{verbatim}
Smoothing State: 0000
10|~3~6| (Type: arc)
4~9 (Type: circle)
2~5~8~|1~|7 (Type: arc)

Smoothing State: 0001
10|~3~5~8~|1 (Type: arc)
4~9 (Type: circle)
2~6|~|7 (Type: arc)
\end{verbatim}

\subsection{Local maps and the differential}

To understand the differential in the Khovanov complex, we consider pairs of smoothing states $s, s' \in \{0,1\}^n$ such that $|s| = i$, $|s'| = i+1$, and $s'$ differs from $s$ at exactly one index $j$, where $s_j = 0$ and $s_j' = 1$.

Each such local modification induces a cobordism between the smoothed diagrams associated to $s$ and $s'$, which in turn leads to a linear map
\[
d_{s \to s'} : W^{\otimes a(s)} \otimes V^{\otimes c(s)} \longrightarrow W^{\otimes a(s')} \otimes V^{\otimes c(s')}.
\]

Given two smoothing states $s \to s'$ that differ at a single crossing, the local change at that crossing alters the connected components in the diagram. By comparing the smoothed diagrams before and after the change, we determine the type of cobordism and assign one of the following five algebraic maps.

Each map is applied only to the affected components and extended via identity on the remaining factors. The possible local maps are as follows:

\begin{enumerate}[label=(\roman*)]
  \item \textbf{Split a circle into two circles} : $\Delta: V \to V \otimes V$
  \[
  \begin{aligned}
    \Delta(v_+) &= v_+ \otimes v_- + v_- \otimes v_+, \\
    \Delta(v_-) &= v_- \otimes v_-.
  \end{aligned}
  \]

  \item \textbf{Merge two circles into a single circle} : $m: V \otimes V \to V$
  \[
  \begin{aligned}
    m(v_+ \otimes v_+) &= v_+, \\
    m(v_+ \otimes v_-) &= m(v_- \otimes v_+) = v_-, \\
    m(v_- \otimes v_-) &= 0.
  \end{aligned}
  \]

  \item \textbf{Saddle between arcs}: $W \otimes W \to W \otimes W$
  \[
  w \otimes w \mapsto 0.
  \]

  \item \textbf{Split an arc into an arc and a circle}: $W \to W \otimes V$
  \[
  w \mapsto w \otimes v_-.
  \]

  \item \textbf{Merge of arc and circle into a single arc}: $W \otimes V \to W$
  \begin{align*}
    w \otimes v_+ &\mapsto w, \\
    w \otimes v_- &\mapsto 0.
  \end{align*}
\end{enumerate}

The global differential in height $k$ is then defined by summing over all such adjacent pairs
\[
d^k = \sum_{\substack{s \to s' \\ |s| = k,\ |s'| = k+1}} (-1)^{\operatorname{sgn}(s \to s')} \cdot d_{s \to s'},
\]
where the sign $\operatorname{sgn}(s \to s')$ is the number of $1$s before the position that the changed entry in $s$.

\subsection{Generating Local Maps}

To construct the differentials in the Khovanov complex, we must compute the induced map between smoothing states $s \to s'$ that differ at a single crossing. This local transition determines a cobordism, which we analyze to produce the corresponding linear map between chain group generators.

\vspace{1ex}
\begin{example}
We implement a function \texttt{generate\_local\_map} to computes the local transition data. Here, the \texttt{smoothing\_state\_generator} is inherited from section \ref{section:example}.
\end{example}

\noindent\textbf{Iutput 1:}
\begin{verbatim}
local_map_1 = generate_local_map(smoothing_state_generator, "0000", "0001")
print_local_map(local_map_1)
\end{verbatim}

\vspace{1ex}
\noindent\textbf{Output 1:}

\begin{lstlisting}
Transition Type: saddle

Pre-State Elements:
  - Type: arc, Representative: [`3',`10|',`6|']
  - Type: circle, Representative: [`4',`9']
  - Type: arc, Representative: [`|1',`8',`2',`|7',`5']

Post-State Elements:
  - Type: arc, Representative: [`3',`10|',`|1',`8',`5']
  - Type: circle, Representative: [`4',`9']
  - Type: arc, Representative: [`2',`|7',`6|']

Calculated Coefficients:
  w([`3',`10|',`6|']) $\otimes$ v_+([`4',`9']) $\otimes$ w([`|1',`8',`2',`|7',`5'])
    $\rightarrow$ w([`3',`10|',`|1',`8',`5']) $\otimes$ v_+([`4',`9']) $\otimes$ w([`2',`|7',`6|']) : Coefficient = 0

  w([`3',`10|',`6|']) $\otimes$ v_+([`4',`9']) $\otimes$ w([`|1',`8',`2',`|7',`5'])
    $\rightarrow$ w([`3',`10|',`|1',`8',`5']) $\otimes$ v_-([`4',`9']) $\otimes$ w([`2',`|7',`6|']) : Coefficient = 0

  w([`3',`10|',`6|']) $\otimes$ v_-([`4',`9']) $\otimes$ w([`|1',`8',`2',`|7',`5'])
    $\rightarrow$ w([`3',`10|',`|1',`8',`5']) $\otimes$ v_+([`4',`9']) $\otimes$ w([`2',`|7',`6|']) : Coefficient = 0

  w([`3',`10|',`6|']) $\otimes$ v_-([`4',`9']) $\otimes$ w([`|1',`8',`2',`|7',`5'])
    $\rightarrow$ w([`3',`10|',`|1',`8',`5']) $\otimes$ v_-([`4',`9']) $\otimes$ w([`2',`|7',`6|']) : Coefficient = 0
\end{lstlisting}
\vspace{1ex}
\noindent\textbf{Input 2:}
\begin{verbatim}
local_map_2 = generate_local_map(smoothing_state_generator, "0000", "0100")
print_local_map(local_map_2)
\end{verbatim}

\vspace{1ex}
\noindent\textbf{Output 2:}

\begin{lstlisting}
Transition Type: split_arc_circle

Pre-State Elements:
  - Type: arc, Representative: [`3',`10|',`6|']
  - Type: circle, Representative: [`4',`9']
  - Type: arc, Representative: [`|1',`8',`2',`|7',`5']

Post-State Elements:
  - Type: arc, Representative: [`3',`10|',`6|']
  - Type: circle, Representative: [`4',`9']
  - Type: arc, Representative: [`|1',`|7']
  - Type: circle, Representative: [`8',`2',`5']

Calculated Coefficients:
  w([`3',`10|',`6|']) $\otimes$ v_+([`4',`9']) $\otimes$ w([`|1',`8',`2',`|7',`5']) $\to$ w([`3',`10|',`6|']) $\otimes$ v_+([`4',`9']) $\otimes$ w([`|1',`|7']) $\otimes$ v_+([`8',`2',`5']) : Coefficient = 0
  w([`3',`10|',`6|']) $\otimes$ v_+([`4',`9']) $\otimes$ w([`|1',`8',`2',`|7',`5']) $\to$ w([`3',`10|',`6|']) $\otimes$ v_+([`4',`9']) $\otimes$ w([`|1',`|7']) $\otimes$ v_-([`8',`2',`5']) : Coefficient = 1
  w([`3',`10|',`6|']) $\otimes$ v_+([`4',`9']) $\otimes$ w([`|1',`8',`2',`|7',`5']) $\to$ w([`3',`10|',`6|']) $\otimes$ v_-([`4',`9']) $\otimes$ w([`|1',`|7']) $\otimes$ v_+([`8',`2',`5']) : Coefficient = 0
  w([`3',`10|',`6|']) $\otimes$ v_+([`4',`9']) $\otimes$ w([`|1',`8',`2',`|7',`5']) $\to$ w([`3',`10|',`6|']) $\otimes$ v_-([`4',`9']) $\otimes$ w([`|1',`|7']) $\otimes$ v_-([`8',`2',`5']) : Coefficient = 0
  w([`3',`10|',`6|']) $\otimes$ v_-([`4',`9']) $\otimes$ w([`|1',`8',`2',`|7',`5']) $\to$ w([`3',`10|',`6|']) $\otimes$ v_+([`4',`9']) $\otimes$ w([`|1',`|7']) $\otimes$ v_+([`8',`2',`5']) : Coefficient = 0
  w([`3',`10|',`6|']) $\otimes$ v_-([`4',`9']) $\otimes$ w([`|1',`8',`2',`|7',`5']) $\to$ w([`3',`10|',`6|']) $\otimes$ v_+([`4',`9']) $\otimes$ w([`|1',`|7']) $\otimes$ v_-([`8',`2',`5']) : Coefficient = 0
  w([`3',`10|',`6|']) $\otimes$ v_-([`4',`9']) $\otimes$ w([`|1',`8',`2',`|7',`5']) $\to$ w([`3',`10|',`6|']) $\otimes$ v_-([`4',`9']) $\otimes$ w([`|1',`|7']) $\otimes$ v_+([`8',`2',`5']) : Coefficient = 0
  w([`3',`10|',`6|']) $\otimes$ v_-([`4',`9']) $\otimes$ w([`|1',`8',`2',`|7',`5']) $\to$ w([`3',`10|',`6|']) $\otimes$ v_-([`4',`9']) $\otimes$ w([`|1',`|7']) $\otimes$ v_-([`8',`2',`5']) : Coefficient = 1
\end{lstlisting}

This function thus automates the determination and evaluation of all local maps $d_{s \to s'}$ in the complex.

Each local map $d_{s \to s'}$ can be represented as a matrix acting between the corresponding basis elements of $C^i$ and $C^{i+1}$, denoted as $A_{i,j}$, where $i$ and $j$ are the indices of the smoothing states $s \in C^i$ and $s' \in C^{i+1}$, respectively. If no such map $d_{s \to s'}$ exists, the corresponding matrix $A_{i,j}$ is defined to be the zero matrix. Each nonzero matrix $A_{i,j}$ is further multiplied by a sign $(-1)^{\deg(s \to s')}$.

These local matrices are assembled into a global boundary matrix by placing each
$A_{i,j}$ into the corresponding block position of the full operator:
\[
A^i =
\begin{bmatrix}
(-1)^{\deg_{1,1}} A_{1,1} & (-1)^{\deg_{1,2}} A_{1,2} & \cdots \\
(-1)^{\deg_{2,1}} A_{2,1} & (-1)^{\deg_{2,2}} A_{2,2} & \cdots \\
\vdots  & \vdots  & \ddots
\end{bmatrix},
\]
where each block $A_{i,j}$ corresponds to the local map from smoothing state $s_i$ to $s_j'$, and $\deg_{k,l} = \deg(s_k \to s_l')$.

\subsection{Computing homology and quantum grading}

With the complete set of boundary matrices $A^i$ constructed from local maps, we can now compute the Khovanov homology. The Khovanov homology groups are defined as the quotient
\[
H^i = \ker(d^i) / \operatorname{im}(d^{i-1}).
\]
Algorithmically, we calculate the generators of $H^i$ by computing the left null space of the augmented matrix
\[
\begin{bmatrix}
A^{i} \\
(A^{i-1})^{T}
\end{bmatrix},
\]
where $(A^{i-1})^{T}$ is the transpose matrix of $A^{i-1}$.

Each generator of the homology group, obtained from the null space of the matrix is a linear combination of basis elements in the chain group $C^i$. Theoretically, since the differential preserves quantum grading, all basis elements contributing to a single homology generator should have the same quantum grading. This allows us, in principle, to assign a well-defined quantum grading to each generator.

However, in practice, we compute the null space using numerical methods such as Singular Value Decomposition (SVD), which may yield approximate (non-sparse) solutions. As a result, a generator may be expressed as a weighted sum of multiple basis elements with different quantum gradings.

To approximate the quantum grading of such a generator, we use the following formula:
\[
q = \frac{|x_1| q_1 + |x_2| q_2 + \cdots + |x_s| q_s}{|x_1| + |x_2| + \cdots + |x_s|},
\]
where $(x_1, x_2, \dots, x_s)$ are the coefficients in the generator vector, and $(q_1, q_2, \dots, q_s)$ are the quantum gradings of the corresponding basis elements.

\subsubsection{Example Usage: Computing Khovanov Homology}
Our code allows users to compute the Khovanov homology directly from a given planar diagram code representation of a knot or tangle. Here, we use our code to calculate the tangle in Example \ref{example:tritangle}:

\begin{verbatim}
from KHomology import calculate_Khovanov_homology_from_PD, explain_result

pdcode = [["2","5","3","6"],
         ["4","|1","5","2"],
         ["6","3","7|","4"]]
result = calculate_Khovanov_homology_from_PD(pdcode)
explain_result(result)
\end{verbatim}

The output will report the detected homology classes with their corresponding homological and quantum gradings:

\begin{verbatim}
Detect a homology class of dimension 0 with quantum degree 1.0.
Detect a homology class of dimension 2 with quantum degree 5.0.
Detect a homology class of dimension 3 with quantum degree 7.0.
\end{verbatim}

\section*{Code Availability}
The source code of this study is openly available at: \url{https://github.com/WeilabMSU/PKHT/}

\section*{Acknowledgments}
This work was supported in part by  Michigan State University Research Foundation and  Bristol-Myers Squibb  65109. Li was supported by an NITMB fellowship supported by grants from the NSF (DMS-2235451) and Simons Foundation (MP-TMPS-00005320). Jian was also supported by the Natural Science Foundation of China (NSFC Grant No. 12401080) and the start-up research fund from Chongqing University of Technology.

\bibliographystyle{plain}  
\bibliography{Reference}

\begin{thebibliography}{10}

\bibitem{adams2004knot}
Colin~Conrad Adams.
\newblock {\em The knot book: an elementary introduction to the mathematical
  theory of knots}.
\newblock American Mathematical Soc., 2004.

\bibitem{bar2002khovanov}
Dror Bar-Natan.
\newblock On {K}hovanov's categorification of the {J}ones polynomial.
\newblock {\em Algebraic \& Geometric Topology}, 2(1):337--370, 2002.

\bibitem{bar2005khovanov}
Dror Bar-Natan.
\newblock Khovanov's homology for tangles and cobordisms.
\newblock {\em Geometry \& Topology}, 9(3):1443--1499, 2005.

\bibitem{bar2007fast}
Dror Bar-Natan.
\newblock Fast {K}hovanov homology computations.
\newblock {\em Journal of Knot Theory and Its Ramifications}, 16(03):243--255,
  2007.

\bibitem{chou1974conformational}
Peter~Y Chou and Gerald~D Fasman.
\newblock Conformational parameters for amino acids in helical, $\beta$-sheet,
  and random coil regions calculated from proteins.
\newblock {\em Biochemistry}, 13(2):211--222, 1974.

\bibitem{dill2012protein}
Ken~A Dill and Justin~L MacCallum.
\newblock The protein-folding problem, 50 years on.
\newblock {\em science}, 338(6110):1042--1046, 2012.

\bibitem{fielden2017molecular}
Stephen~DP Fielden, David~A Leigh, and Steffen~L Woltering.
\newblock Molecular knots.
\newblock {\em Angewandte Chemie International Edition}, 56(37):11166--11194,
  2017.

\bibitem{forgan2011chemical}
Ross~S Forgan, Jean-Pierre Sauvage, and J~Fraser Stoddart.
\newblock Chemical topology: complex molecular knots, links, and entanglements.
\newblock {\em Chemical Reviews}, 111(9):5434--5464, 2011.

\bibitem{hsieh2012quantum}
Chun-Chung Hsieh.
\newblock Quantum field theory and knot invariants.
\newblock In {\em Advances in Quantum Field Theory}, pages 155--184.
  IntechOpen, 2012.

\bibitem{khovanov2000categorification}
Mikhail Khovanov.
\newblock A categorification of the {J}ones polynomial.
\newblock {\em Duke Math. J.}, 104(1):359--426, 2000.

\bibitem{khovanov2002functor}
Mikhail Khovanov.
\newblock A functor-valued invariant of tangles.
\newblock {\em Algebraic \& Geometric Topology}, 2(2):665--741, 2002.

\bibitem{liu2024persistent}
Jian Liu, Li~Shen, and Guo-Wei Wei.
\newblock Persistent {K}hovanov homology of tangles.
\newblock {\em arXiv preprint arXiv:2409.18312}, 2024.

\bibitem{rolfsen2003knots}
Dale Rolfsen.
\newblock {\em Knots and links}.
\newblock Number 346. American Mathematical Soc., 2003.

\bibitem{schmidhuber2025quantum}
Alexander Schmidhuber, Michele Reilly, Paolo Zanardi, Seth Lloyd, and Aaron
  Lauda.
\newblock A quantum algorithm for {K}hovanov homology.
\newblock {\em arXiv preprint arXiv:2501.12378}, 2025.

\bibitem{shen2024evolutionary}
Li~Shen, Jian Liu, and Guo-Wei Wei.
\newblock Evolutionary {K}hovanov homology.
\newblock {\em AIMS Mathematics}, 9(9):26139--26165, 2024.

\bibitem{williams2008computations}
Brandon Williams.
\newblock Computations in {K}hovanov homology.
\newblock {\em University of Illinois at Chicago, Chicago, Illinois}, 2008.

\bibitem{witten1989quantum}
Edward Witten.
\newblock Quantum field theory and the {J}ones polynomial.
\newblock {\em Communications in Mathematical Physics}, 121(3):351--399, 1989.

\end{thebibliography}

\end{document}